\title{Sidon sets, sum-free sets and linear codes}
\author{
  Ingo Czerwinski\thanks{Faculty of Mathematics,
        Otto von Guericke University Magdeburg,
        39106 Magdeburg, Germany,
        (\texttt{ingo@czerwinski.eu}, \texttt{alexander.pott@ovgu.de})} \;
  and Alexander Pott\footnotemark[1]
}
\date{}
\newcommand{\moreSpaceBetweenRows}[1]{\renewcommand{\arraystretch}{#1}}
\newtheorem{thm}{Theorem}[section]
\newtheorem{cor}[thm]{Corollary}
\newtheorem{lem}[thm]{Lemma}
\newtheorem{prop}[thm]{Proposition}
\newtheorem{defn}[thm]{Definition}
\newtheorem{rem}[thm]{Remark}
\newcommand{\secref}[1]{Section~\ref{#1}}
\newcommand{\thmref}[1]{Theorem~\ref{#1}}
\newcommand{\corref}[1]{Corollary~\ref{#1}}
\newcommand{\lemref}[1]{Lemma~\ref{#1}}
\newcommand{\propref}[1]{Proposition~\ref{#1}}
\newcommand{\tabref}[1]{Table~\ref{#1}}
\newcommand{\nat}{\mathbb{N}}
\newcommand{\inte}{\mathbb{Z}}
\newcommand{\binf}{\mathbb{F}_2}
\newcommand{\binvn}[1]{\mathbb{F}_2^{#1}}
\newcommand{\eps}{\varepsilon}
\newcommand{\dotcup}{\mathbin{\mathaccent\cdot\cup}}
\providecommand{\transpose}[1]{#1^\intercal}
\newcommand{\wider}[1]{\;#1\;}
\newcommand{\set}[1]{\lbrace #1 \rbrace}
\newcommand{\sett}[2]{\lbrace #1 : #2 \rbrace}
\newcommand{\setminzero}{\setminus\set{0}}
\providecommand{\vspan}[1]{\langle #1\rangle}
\providecommand{\kSums}[2]{\mathcal{S}_{#1}(#2)}
\providecommand{\twoSums}[1]{\kSums{2}{#1}}
\providecommand{\threeSums}[1]{\kSums{3}{#1}}
\providecommand{\fourSums}[1]{\kSums{4}{#1}}
\providecommand{\kStarSums}[2]{\mathcal{S}_{#1}^*(#2)}
\providecommand{\kStarSumsB}[2]{\mathcal{S}_{#1}^*\bigl(#2\bigr)}
\providecommand{\twoStarSums}[1]{\kStarSums{2}{#1}}
\providecommand{\twoStarSumsB}[1]{\kStarSumsB{2}{#1}}
\providecommand{\threeStarSums}[1]{\kStarSums{3}{#1}}
\providecommand{\threeStarSumsB}[1]{\kStarSumsB{3}{#1}}
\providecommand{\fourStarSums}[1]{\kStarSums{4}{#1}}
\providecommand{\code}{\mathcal{C}}
\providecommand{\checkMat}{\mathcal{H}}
\providecommand{\assoMat}{\mathcal{M}}
\providecommand{\punct}[1]{\mathcal{PU}(#1)}
\providecommand{\dmax}[1]{d_{max}(#1)}
\providecommand{\dmaxTab}{(\dmax{n, k})_{n,k}}
\providecommand{\orth}{\perp}
\providecommand{\smax}[1]{s_{max}(\binvn{#1})}
\DeclareMathOperator{\sfs}{\mathit{sfs}}
\providecommand{\sfsmax}[1]{\sfs_{max}(\binvn{#1})}
\providecommand{\abs}[1]{\left\lvert#1\right\rvert}
\providecommand{\floor}[1]{\left\lfloor#1\right\rfloor}
\providecommand{\mathKeyword}[1]{\textit{#1}}
\providecommand{\keywords}[1]{\textbf{Keywords} #1 \\[12px]}
\providecommand{\subclass}[1]{\textbf{Mathematics Subject Classification (2020)} #1 }
\providecommand{\equRefTrivialBound}{(1)}
\providecommand{\equRefNewBound}{(2)}
\providecommand{\equRefContradict}{(3)}
\providecommand{\equRefEstimate}{(4)}
\providecommand{\equRefAoddBtwo}{(5)}
\providecommand{\equRefAoddBone}{(6)}
\providecommand{\equRefAoddBzero}{(7)}
\providecommand{\equRefAevenBtwo}{(8)}
\providecommand{\equRefAevenBone}{(9)}
\begin{document}

\maketitle

\renewcommand{\sectionmark}[1]{}

\renewcommand{\labelenumi}{(\alph{enumi})} 

\begin{abstract}
  Finding the maximum size of a Sidon set in $\binvn{t}$ is of research interest
  for more than  40 years. In order to tackle this problem
  we recall a  one-to-one correspondence between sum-free Sidon sets
  and linear codes with minimum distance greater or equal 5.
  Our main contribution about codes is a new non-existence result
  for linear codes with minimum distance 5 based on a sharpening
  of the Johnson bound.
  This gives, on the Sidon set side, an improvement
  of the general upper bound for the maximum size of a Sidon set.
  Additionally, we characterise maximal Sidon sets, that are those Sidon sets
  which can not be extended by adding elements without loosing the Sidon property,
  up to dimension 6 and give all possible sizes for dimension 7 and 8
  determined by computer calculations.
\end{abstract}

\keywords{Sidon set, sum{-}free set, maximum size, linear binary code, codes bound.}

\subclass{11B13, 94B05, 94B65}



\section{Introduction}

In the early 1930s Sidon introduced $B_2$-sequences of positive integers in
connection with his work on Fourier analysis \cite{Sidon32}, \cite{Sidon35}.
Later, Babai and S{\'o}s \cite{BS85} generalised
the definition of $B_2$-sequences to arbitrary groups and called them Sidon sets.
In this work, we focus only on Sidon sets in $\binvn{t}$, the $t$-dimensional vector space over the binary field $\binf$.

\begin{defn}
  Let $M$ be a subset of $\binvn{t}$. $M$ is called \mathKeyword{Sidon} if $m_1+m_2\ne m_3+m_4$ for all pairwise distinct $m_1,m_2,m_3,m_4\in M$.
\end{defn}

Since the definition of Sidon sets is based on sums, we introduce the following notation:
Let $M$ be a subset of $\binvn{t}$. For any $k\geq 2$ we call
\[
\kSums{k}{M} = \sett{m_1 + \dotsb + m_k}
{m_1, \dotsc, m_k \in M }
\]
the \mathKeyword{$k${-}sums} of $M$ and
\[
\kStarSums{k}{M} = \sett{m_1 + \dotsb + m_k}
{m_1, \dotsc, m_k \in M \text{ pairwise distinct} }
\]
the \mathKeyword{$k${-}star{-}sums} of $M$.
In this paper, only 2, 3 and 4-(star)-sums are considered.
The characteristic 2 of $\binvn{t}$ leads to the following
frequently used properties:
{\it
  \begin{enumerate}
    \item $\twoSums{M} = \twoStarSums{M} \cup \set{0}$;
    \item $\threeSums{M} = \threeStarSums{M} \cup M$;
    \item $\fourSums{M} = \fourStarSums{M} \cup \twoStarSums{M} \cup \set{0}$.
  \end{enumerate}
}

We recall also, that the \mathKeyword{(Hamming) weight} of a vector
is the number of its non-zero entries.
The weight of a vector is of interest when considering the sums of the elements of a set containing the standard basis.

The Sidon property of $M$ can be characterised in terms
of $2${-}star{-}sums and $3${-}star{-}sums
by the following equivalent statements:
{\it
  \begin{enumerate}
    \item $M$ is Sidon;
    \item $\abs{\twoStarSums{M}} = \binom{\abs{M}}{2}$;
    \item $\threeStarSums{M} \cap M = \emptyset$.
  \end{enumerate}
}

Now that we have briefly introduced Sidon sets,
in \secref{sec:maximal-sidon-sets} we will begin to discuss
the fundamental problem of their maximum size.
For this we study maximal Sidon sets, which are Sidon sets
that cannot be extended by adding new elements without losing the Sidon property.
\secref{sec:sum-free-sets} introduces sum-free sets
which are used to recall a one-to-one correspondence
between additive structures and linear codes in \secref{sec:linear-codes}.
In \secref{sec:non-existence-results} we give a new non-existence result for linear codes
with minimum distance 5 based on a sharpening of the Johnson bound,
which, on the Sidon set side, gives an improvement of the general upper bound
for the maximum size of a Sidon set
(\thmref{thm:improved-codes-non-existence-sidon-bound}).

We note that several statements in \secref{sec:maximal-sidon-sets}
and \ref{sec:linear-codes}
have also been investigated in connection with almost perfect nonlinear (APN)
functions, which are special types of Sidon sets.
For instance, \propref{prop:sidon-extension} and \ref{prop:max-sidon-charac}
can be found, in APN language, in \cite{carlet22},
our \propref{prop:sumfree-sidon-code} is related to Theorem 5 in \cite{CCZ98},
and \thmref{thm:large-sidon-set-properties} is related to
Proposition 4 in \cite{CCZ98}.

Another combinatorial problem and its connections to linear codes,
which contains Sidon sets as a special case, is studied in \cite{Sidorenko2020}.

\section{Maximal Sidon sets}
\label{sec:maximal-sidon-sets}

The fundamental problem about a Sidon set
is the question about its maximum size, which was already discussed
about 40 years ago by Babai and S{\'o}s \cite{BS85}.
We will denote by $\smax{t}$ the maximum size of a Sidon set in $\binvn{t}$.
An upper bound arises directly from the fact that all
2{-}star{-}sums of $M$ have to be distinct and non-zero, hence
\[
\binom{\abs{M}}{2} = \frac{\abs{M}(\abs{M}-1)}{2}
\leq \abs{\binvn{t}\setminzero}.
\]

This bound is still the best known upper bound and
was translated by Carlet and Mesnager \cite{CM22} into an explicit form:
\[
\smax{t} \leq \floor{\frac{1 + \sqrt{2^{t+3}-7}}{2}}.
\]

In the next Proposition we rewrite this bound slightly and
call it from now on the \mathKeyword{trivial upper bound} for
the maximum size of a Sidon set.
Later in \thmref{thm:improved-codes-non-existence-sidon-bound}
we will improve this trivial upper bound.

\begin{prop} \label{prop:max-sidon-upper-bound}
  For any $t$, an upper bound for the maximum size of a Sidon set
  in $\binvn{t}$ is given by
  \begin{equation} 
  \smax{t} \leq
  \begin{cases}
    2^{\frac{t + 1}{2}} &\text{ for $t$ odd},\\
    \floor{\sqrt{2^{t+1}} + 0.5}     &\text{ for $t$ even}.
  \end{cases}
  \end{equation}
\end{prop}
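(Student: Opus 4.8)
The plan is to start from the counting inequality already recorded in the text and treat the odd and even exponents separately only at the very end. For a Sidon set $M$ of size $s = \abs{M}$, its $\binom{s}{2}$ two-star-sums are pairwise distinct (by the Sidon characterisation) and nonzero (since in characteristic $2$ a sum of two distinct elements is never $0$), so they inject into $\binvn{t}\setminzero$. This yields $\binom{s}{2} \le 2^t - 1$, equivalently $s(s-1) \le 2^{t+1} - 2$. Everything below is deduced from this single quadratic inequality.

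For $t$ odd I would set $N = 2^{\frac{t+1}{2}}$, which is a genuine integer because $t+1$ is even, and note $N^2 = 2^{t+1}$. Then I argue by contradiction: if $s \ge N+1$, then $s(s-1) \ge (N+1)N = N^2 + N > N^2 - 2 = 2^{t+1} - 2$, contradicting the counting bound. Hence $s \le N = 2^{\frac{t+1}{2}}$, which is the claimed bound in the odd case. The only structural fact used here is that $2^{t+1}$ is a perfect square exactly when $t$ is odd.

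For $t$ even I would first convert the quadratic inequality into an explicit bound on $s$ by completing the square: $s(s-1) \le 2^{t+1} - 2$ gives $\bigl(s - \tfrac12\bigr)^2 \le 2^{t+1} - \tfrac74$, hence $s \le \tfrac12 + \sqrt{2^{t+1} - \tfrac74}$, which is precisely the Carlet--Mesnager form $\floor{\frac{1 + \sqrt{2^{t+3}-7}}{2}}$ after clearing denominators. It then remains to compare this with the claimed expression $\floor{\sqrt{2^{t+1}} + 0.5}$. Since $\sqrt{2^{t+1} - \tfrac74} < \sqrt{2^{t+1}}$, monotonicity of the floor immediately gives $s \le \floor{\tfrac12 + \sqrt{2^{t+1} - \tfrac74}} \le \floor{\sqrt{2^{t+1}} + 0.5}$, which is the stated bound.

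The computation is routine, so there is no genuine obstacle; the only point deserving care is the even case, where one should check that relaxing $2^{t+1} - \tfrac74$ to $2^{t+1}$ under the root does not enlarge the floor and thereby turn the advertised ``slight rewriting'' into a strict weakening. I would verify that the two floors in fact coincide: they can differ only if some integer $n$ satisfies $2^{t+1} - 2 < n(n-1) \le 2^{t+1} - 1$, forcing $n(n-1) = 2^{t+1} - 1$, which is impossible because $n(n-1)$ is even while $2^{t+1} - 1$ is odd. For the proposition as stated only the inequality is needed, so this last verification is optional, but it confirms that the new form is a faithful rewriting rather than a loss.
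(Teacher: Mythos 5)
Your proof is correct and follows essentially the same route as the paper: the same counting inequality $\binom{s}{2}\leq 2^t-1$, the same direct check in the odd case, and in the even case the same comparison of $\floor{\frac{1+\sqrt{2^{t+3}-7}}{2}}$ with $\floor{\sqrt{2^{t+1}}+0.5}$, including the parity-of-$n(n-1)$ observation that the paper uses to show the two floors coincide. The only (harmless) difference is that you obtain the stated inequality directly from monotonicity of the floor and treat the equality of the two expressions as an optional check, whereas the paper proves that equality outright.
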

\begin{proof}
  Let $M\subseteq \binvn{t}$ be a Sidon set. Then the sums $m_1+m_2$ are
  distinct and non-zero for all distinct $m_1,m_2\in M$ by the Sidon definition. Therefore
  \[
  \binom{\abs{M}}{2} = \frac{\abs{M}(\abs{M}-1)}{2}
  \leq \abs{\binvn{t}\setminzero}.
  \]
  When $t$ is odd, then $\abs{M} = 2^{\frac{t + 1}{2}}$ fulfills
  this inequality, but $\abs{M} = 2^{\frac{t + 1}{2}}+1$ not anymore.

  Let $t$ be even. We show that
  $\floor{\frac{1 + \sqrt{2^{t+3}-7}}{2}} = \floor{\sqrt{2^{t+1}} + 0.5}$.
  Of course, we have
  $\frac{1 + \sqrt{2^{t+3}-7}}{2} \wider{\leq} \sqrt{2^{t+1}} + 0.5$.
  Assume that there exists $k\in\nat$ such that
  \[
  \frac{1 + \sqrt{2^{t+3}-7}}{2} \wider{<} k \wider{\leq} \sqrt{2^{t+1}} + 0.5,
  \]
  which is equivalent to
  \[
  2^{t+1} - \frac{7}{4} \wider{<} (k - \frac{1}{2})^2 \wider{\leq} 2^{t+1}
  \]
  and
  \[
  2^{t+1} - 2 \wider{<} k(k-1) \wider{\leq} 2^{t+1} - \frac{1}{4}.
  \]
  But this contradicts $k(k-1)$ even.
\end{proof}

After looking at an upper bound for the maximum size of a Sidon set,
it is natural to ask whether every Sidon set can be extended to a
Sidon set of maximum size by adding elements.
Already in dimension 6 this is not true, as we will see later in
\propref{prop:max-sidon-charac-small-dim}.

Therefore, we now concentrate on the question how a Sidon set
can be extended by adding elements without losing the Sidon property.
Before we state the next Proposition from \cite{RRW22},
recall the following notation for sets $A$, $B$ and $C$:
$A\dotcup B = C$ means $A\cup B = C$ and $A\cap B=\emptyset$.

\begin{prop}[\cite{RRW22}]\label{prop:sidon-extension}
  Let $M$ be a Sidon set in $\binvn{t}$ and $g\in\binvn{t}\setminus M$.
  Then $M\dotcup\set{g}$ is Sidon if and only if
  $g\in\binvn{t}\setminus\threeSums{M} =
  \binvn{t}\setminus\big(\threeStarSums{M} \dotcup M\big)$.
\end{prop}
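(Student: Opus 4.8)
The plan is to argue directly from the definition of the Sidon property, exploiting the fact that since $M$ is already Sidon, any failure of the Sidon property for $M\dotcup\set{g}$ must be an equation that \emph{involves} $g$. First I would set $M' = M\dotcup\set{g}$ and unwind the definition: $M'$ is \emph{not} Sidon exactly when there exist pairwise distinct $m_1,m_2,m_3,m_4\in M'$ with $m_1+m_2 = m_3+m_4$. Because $M$ is Sidon, these four elements cannot all lie in $M$; and since they are pairwise distinct, at most one of them can equal $g$. Hence exactly one does, and after relabelling (using the symmetry of the equation) I may assume $m_1 = g$, so that $m_2,m_3,m_4\in M$.

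The core of the argument is then a short reversible equivalence. From $g+m_2 = m_3+m_4$ I would rewrite the violation, using the characteristic-$2$ identity, as $g = m_2+m_3+m_4$ with $m_2,m_3,m_4\in M$; the pairwise distinctness of $m_2,m_3,m_4$ is inherited from that of $m_1=g,m_2,m_3,m_4$, so such a violation exists precisely when $g\in\threeStarSums{M}$. Conversely, if $g = m_2+m_3+m_4$ for pairwise distinct $m_2,m_3,m_4\in M$, then $g+m_2 = m_3+m_4$ is a genuine Sidon violation: here $g\notin M$ forces $g\neq m_2,m_3,m_4$, so all four entries are pairwise distinct. This establishes that $M'$ is Sidon if and only if $g\notin\threeStarSums{M}$.

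Finally I would translate this into the stated form. Since $M$ is Sidon, characterisation (3) gives $\threeStarSums{M}\cap M = \emptyset$, so property (2) may be read as the disjoint union $\threeSums{M} = \threeStarSums{M}\dotcup M$. As $g\notin M$ by hypothesis, the condition $g\notin\threeStarSums{M}$ is equivalent to $g\notin\threeStarSums{M}\dotcup M = \threeSums{M}$, that is, $g\in\binvn{t}\setminus\threeSums{M} = \binvn{t}\setminus\bigl(\threeStarSums{M}\dotcup M\bigr)$, as required.

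The step I expect to need the most care is the bookkeeping in the case analysis: arguing cleanly that a violation in $M'$ must use $g$ exactly once (which relies on both $M$ being Sidon and the four indices being pairwise distinct), and then verifying the distinctness conditions in \emph{both} directions so that the passage between a $2$-star-sum equation and membership in $\threeStarSums{M}$ is genuinely an equivalence rather than a one-way implication.
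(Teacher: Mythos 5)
Your proposal is correct and follows essentially the same route as the paper: both reduce a Sidon violation in $M\dotcup\set{g}$ to an equation $g+m=m_1+m_2$ with $m,m_1,m_2\in M$ pairwise distinct, rewrite it as $g=m+m_1+m_2$, and identify this with $g\in\threeStarSums{M}$, using $g\notin M$ and the disjointness $\threeStarSums{M}\cap M=\emptyset$ to pass to $\threeSums{M}$. You simply spell out the bookkeeping (why exactly one of the four elements is $g$, and the distinctness checks in both directions) that the paper leaves implicit.
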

\begin{proof}
Let $M\subseteq \binvn{t}$ be a Sidon set.
From $g\in\binvn{t}\setminus M$ follows that
$M\dotcup\set{g}$ is not Sidon if and only if there exist $m,m_1,m_2\in M$
pairwise distinct such that $g+m=m_1+m_2$ which is equivalent to $g=m+m_1+m_2$.
Hence
$g\in\threeStarSums{M}\subseteq \threeStarSums{M} \dotcup M = \threeSums{M}$.
\end{proof}

Those Sidon sets which can not be extended by adding elements
without losing the Sidon property are of particular interest:

\begin{defn}
  A Sidon set $M\subseteq\binvn{t}$ is called \mathKeyword{maximal}
  if $M = S$ for every Sidon set $S$ with $M \subseteq S \subseteq \binvn{t}$.
\end{defn}

\propref{prop:sidon-extension} helps us to characterise maximal Sidon sets via
their $3${-}star{-}sums and $3${-}sums.

\begin{prop}[\cite{RRW22}]\label{prop:max-sidon-charac}
  Let $M$ be a Sidon set in $\binvn{t}$.
  Then the following statements are equivalent:
  \begin{enumerate}
    \item $M$ is maximal;
    \item $\threeSums{M} = \binvn{t}$;
    \item $\threeStarSums{M} \dotcup M = \binvn{t}$
    (that means: $\threeStarSums{M} \cup M = \binvn{t}$ and
                 $\threeStarSums{M} \cap M = \emptyset$).
  \end{enumerate}
\end{prop}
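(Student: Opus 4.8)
The plan is to dispose of the easy equivalence (b) $\Leftrightarrow$ (c) first, and then to reduce the global maximality condition (a) to a one-element extension test, which \propref{prop:sidon-extension} converts into a statement about $\threeSums{M}$.

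For (b) $\Leftrightarrow$ (c): since $M$ is Sidon, the third characterisation of the Sidon property gives $\threeStarSums{M}\cap M=\emptyset$. Hence the identity $\threeSums{M}=\threeStarSums{M}\cup M$ is in fact a disjoint union, $\threeSums{M}=\threeStarSums{M}\dotcup M$. Therefore $\threeSums{M}=\binvn{t}$ holds exactly when $\threeStarSums{M}\dotcup M=\binvn{t}$, and the disjointness clause spelled out in (c) is automatic from the Sidon property. This settles (b) $\Leftrightarrow$ (c) with no further work.

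For (a) $\Leftrightarrow$ (b), I would first show that $M$ is maximal if and only if it cannot be extended by a single element, i.e.\ $M\dotcup\set{g}$ is not Sidon for every $g\in\binvn{t}\setminus M$. The nontrivial direction is: if $M$ is not maximal, there is a Sidon set $S$ with $M\subsetneq S$; choosing any $g\in S\setminus M$, the subset $M\dotcup\set{g}\subseteq S$ is again Sidon because every subset of a Sidon set is Sidon. The reverse direction is immediate. Now apply \propref{prop:sidon-extension}: $M\dotcup\set{g}$ is Sidon precisely when $g\in\binvn{t}\setminus\threeSums{M}$, so non-extendability reads $\binvn{t}\setminus M\subseteq\threeSums{M}$. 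Combining this with $M\subseteq\threeSums{M}$ (immediate from $\threeSums{M}=\threeStarSums{M}\cup M$) gives $\binvn{t}\subseteq\threeSums{M}$, which is (b); conversely (b) trivially yields $\binvn{t}\setminus M\subseteq\threeSums{M}$.

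The main obstacle is the reduction from the global notion of maximality (no Sidon superset at all) to the local one-step test; this hinges on the hereditary fact that subsets of Sidon sets stay Sidon, which lets one extract a single admissible extension element from any strictly larger Sidon set. After that the argument is purely set-theoretic, using only $M\subseteq\threeSums{M}$, the disjointness coming from the Sidon property, and \propref{prop:sidon-extension}. The hypothesis $\abs{M}\geq 3$ is exactly what makes the $3$-star-sum statements and \propref{prop:sidon-extension} meaningful.
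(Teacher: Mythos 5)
Your proof is correct and follows exactly the route the paper intends: the paper states this proposition without proof, remarking only that \propref{prop:sidon-extension} ``helps us to characterise maximal Sidon sets via their $3$-star-sums and $3$-sums'', and your argument—reducing maximality to the one-element extension test via the hereditary nature of the Sidon property, applying \propref{prop:sidon-extension}, and using $\threeSums{M}=\threeStarSums{M}\cup M$ together with $\threeStarSums{M}\cap M=\emptyset$ for a Sidon set—is precisely that intended derivation, written out in full.
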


The property of being Sidon and of being maximal Sidon is invariant
under the action of the affine group.

\begin{prop}\label{prop:Sidon-equiv}
  Let $M$ be a subset of $\binvn{t}$ and $T\colon\binvn{t}\to \binvn{t}$ be
  an affine permutation. Then
  \begin{enumerate}
    \item $M$ is Sidon if and only if $T(M)$ is Sidon; and
    \item $M$ is maximal Sidon if and only if $T(M)$ is maximal Sidon.
  \end{enumerate}
\end{prop}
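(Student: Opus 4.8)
The plan is to write the affine permutation in the standard form $T(x) = A(x) + b$, where $A\colon\binvn{t}\to\binvn{t}$ is a linear bijection and $b\in\binvn{t}$ is fixed, and to exploit the fact that in characteristic $2$ the translation part $b$ cancels whenever an even number of images is summed. Concretely, for any $m_1,m_2\in M$ we have $T(m_1)+T(m_2)=A(m_1)+A(m_2)+2b=A(m_1+m_2)$, and likewise $T(m_1)+T(m_2)+T(m_3)+T(m_4)=A(m_1+m_2+m_3+m_4)$ since $4b=0$. This is the one algebraic observation the whole argument rests on, and it is exactly where the even number of summands in the Sidon condition matters.

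For part (1), I would argue as follows. Since $T$ is a bijection, it sends pairwise distinct elements of $M$ to pairwise distinct elements of $T(M)$, and every quadruple of pairwise distinct elements of $T(M)$ arises in this way. By the displayed identity, the equation $T(m_1)+T(m_2)=T(m_3)+T(m_4)$ is equivalent to $A(m_1+m_2)=A(m_3+m_4)$, and since $A$ is injective this holds if and only if $m_1+m_2=m_3+m_4$. Hence a violating quadruple exists for $T(M)$ exactly when one exists for $M$, so $M$ is Sidon if and only if $T(M)$ is.

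For part (2), I would transport the maximality condition along the bijection $T$ on the power set of $\binvn{t}$. Assume $M$ is maximal Sidon; by part (1), $T(M)$ is Sidon. Given any Sidon set $S'$ with $T(M)\subseteq S'\subseteq\binvn{t}$, set $S=T^{-1}(S')$. Because $T^{-1}$ is again an affine permutation, part (1) shows $S$ is Sidon, and $T^{-1}$ preserves inclusions, so $M\subseteq S\subseteq\binvn{t}$. Maximality of $M$ forces $S=M$, whence $S'=T(S)=T(M)$, so $T(M)$ is maximal. The converse direction is obtained by applying this implication to $T^{-1}$ in place of $T$.

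Alternatively, when $\abs{M}\geq 3$ one could invoke \propref{prop:max-sidon-charac}: the identity $T(m_1)+T(m_2)+T(m_3)=A(m_1+m_2+m_3)+b=T(m_1+m_2+m_3)$ (using $3b=b$) gives $\threeSums{T(M)}=T(\threeSums{M})$, so $\threeSums{M}=\binvn{t}$ if and only if $\threeSums{T(M)}=\binvn{t}$. I would prefer the bijection argument above, since it needs no case distinction on $\abs{M}$ and does not rely on \propref{prop:max-sidon-charac}. I do not expect any genuine obstacle here; the only point demanding care is keeping track of how the translation $b$ behaves under sums of two, three and four terms, which is precisely the parity bookkeeping recorded above.
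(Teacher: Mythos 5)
Your proof is correct. For part (a) you and the paper rest on the same key fact --- the translation part of $T$ cancels in sums of an even number of terms --- though the paper packages it as $\abs{\twoStarSumsB{T(M)}}=\abs{L\bigl(\twoStarSums{M}\bigr)}=\binom{\abs{M}}{2}$ while you argue directly on violating quadruples; these are interchangeable. For part (b) your main route is genuinely different: you transport maximality abstractly, pulling back any Sidon set $S'$ with $T(M)\subseteq S'$ along $T^{-1}$ (itself an affine permutation, so part (a) applies to it) and invoking maximality of $M$. The paper instead uses the odd-sum counterpart of your cancellation observation, namely $\threeStarSumsB{T(M)}=T\bigl(\threeStarSums{M}\bigr)$, combined with the characterisation of maximal Sidon sets via $3$-star-sums from \propref{prop:max-sidon-charac}. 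Your argument buys independence from that characterisation and imposes no size restriction on $M$ (whereas \propref{prop:max-sidon-charac} formally assumes $\abs{M}\geq 3$); the paper's argument buys the explicit and reusable set identity for $\threeStarSumsB{T(M)}$, which is exactly the alternative you sketch in your final paragraph. Either version is a complete proof.
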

\begin{proof}
  Let be $T=L+a$ where $L\colon\binvn{t}\to \binvn{t}$
  is a a linear permutation and $a\in\binvn{t}$.
  From $T$ affine and bijective follows $\abs{T(M)} = \abs{M}$,
  \begin{align*}
    \abs{\twoStarSumsB{T(M)}} &= \abs{L\bigl(\twoStarSums{M}\bigr)} = \abs{\twoStarSums{M}} = \binom{\abs{M}}{2},\\
    \abs{\threeStarSumsB{T(M)}} &= \abs{T\bigl(\threeStarSums{M}\bigr)} = \abs{\threeStarSums{M}} = \abs{\binvn{t}\setminus M},
  \end{align*}
  and therefore (a) and (b).
\end{proof}

After introducing maximal Sidon sets we shortly mention the well-known fact
that the graph of an APN function is a Sidon set.
It \cite{RRW22} it is shown that the graph of the classical APN example $x^3$
is a maximal Sidon set.
And in \cite{carlet22}, APN functions whose graphs are maximal Sidon sets are discussed in general.
Recently, some maximal Sidon sets (first introduced in \cite{CM22}, \cite{CP21}),
which are larger than graphs of APN functions, are discussed in \cite{nagy22}.

In order to characterise maximal Sidon sets in small dimensions,
we recall some basic properties of subsets of $\binvn{t}$.
Here $\vspan{M}$ denotes the linear span.

\begin{lem} \label{lem:set-obda}
  Let $M$ be a subset of $\binvn{t}$ and let $e_1, \dots, e_t$ be the
  standard basis of $\binvn{t}$.
  \begin{enumerate}
    \item
      If $\dim \vspan{M} = t$ and $\abs{M}\geq t+1$, then there exists an affine permutation $T\colon\binvn{t}\to \binvn{t}$ such that
      \[
        \set{0, e_1, \dots, e_k} \subseteq T(M) \subseteq \vspan{e_1, \dots, e_k}
      \]
      and $k\in\set{t-1,t}$.
    \item
      If $\set{0, e_1, \dots, e_t} \subseteq M$ and if there is an element $m$ of $M$
      with weight $w\geq 2$, then there exists a linear permutation
      $L\colon\binvn{t}\to \binvn{t}$ such that
      \[
      \set{0, e_1, \dots, e_t, e_1+e_2+\dotsb+e_w} \subseteq L(M).
      \]
  \end{enumerate}
\end{lem}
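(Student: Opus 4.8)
The plan is to prove both parts by exhibiting explicit affine (respectively linear) permutations, so that everything reduces to statements about the (affine) independence of finitely many chosen points. For part~(a) the useful reformulation is that an affine permutation $T=L+a$, with $L$ a linear permutation and $a\in\binvn{t}$, satisfies $\set{0,e_1,\dots,e_t}\subseteq T(M)$ exactly when $M$ contains $t+1$ affinely independent points $p_0,p_1,\dots,p_t$: taking $L$ to be the linear permutation that sends the basis $p_1+p_0,\dots,p_t+p_0$ to $e_1,\dots,e_t$ and putting $T(x)=L(x+p_0)$ yields $T(p_0)=L(0)=0$ and $T(p_i)=L(p_i+p_0)=e_i$. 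Thus the entire content of~(a) is the extraction of $t+1$ affinely independent points from $M$.

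To find such points I would first invoke $\dim\vspan{M}=t$ to choose a linear basis $b_1,\dots,b_t\in M$, and then use $\abs{M}\geq t+1$ to pick a further element $b_0\in M\setminus\set{b_1,\dots,b_t}$. It remains to arrange that $b_0,b_1,\dots,b_t$ are affinely independent, that is, that $b_1+b_0,\dots,b_t+b_0$ are linearly independent. Writing $b_0=\sum_{i\in J}b_i$ in the chosen basis, a short characteristic-$2$ computation shows that a nontrivial relation $\sum_{i\in S}(b_i+b_0)=0$ forces either $\abs{S}$ even with $\sum_{i\in S}b_i=0$ (impossible) or $\abs{S}$ odd with $S=J$; hence the differences are linearly independent precisely when $\abs{J}$ is even. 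This parity condition is the step I expect to be the main obstacle: when $b_0$ is the sum of an odd number of the $b_i$, the naive choice of base point fails, and one must re-select the base point or exchange one of the $t+1$ points for another element of $M$ to restore affine independence. Equivalently, the delicate point is to guarantee that $M$ is not contained in an affine hyperplane $\sett{x\in\binvn{t}}{\langle x,c\rangle=1}$, which would cap the affine span of $M$ at dimension $t-1$; this is where the hypotheses must be used with care. Once a genuinely affinely independent $(t+1)$-tuple has been secured, the map $T$ constructed above completes part~(a).

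Part~(b) is comparatively routine, and I would settle it with a coordinate permutation. Since $m\in M$ has weight $w\geq 2$, its support is a $w$-element index set $\set{i_1,\dots,i_w}$ and $m=e_{i_1}+\dots+e_{i_w}$. Let $L$ be the linear permutation induced by any permutation of coordinates carrying $\set{i_1,\dots,i_w}$ onto $\set{1,\dots,w}$. Such an $L$ fixes $0$ and merely permutes the standard basis, so $\set{0,e_1,\dots,e_t}\subseteq L(M)$ persists, while it sends $m$ to $e_1+\dots+e_w$; therefore $\set{0,e_1,\dots,e_t,e_1+\dots+e_w}\subseteq L(M)$, as required. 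The only thing to observe is that a permutation of coordinates is indeed a linear bijection, so no real obstacle arises here.
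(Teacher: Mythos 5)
Your part (b) is correct and is the obvious argument (a coordinate permutation fixing $0$, permuting the $e_i$ among themselves, and carrying the support of $m$ onto $\set{1,\dots,w}$). Part (a), however, has a genuine gap, and it sits exactly where you flagged it: after choosing a linear basis $b_1,\dots,b_t\in M$ and one further point $b_0\in M$, you correctly observe that $b_0,b_1,\dots,b_t$ are affinely independent precisely when $b_0$ is the sum of an even number of the $b_i$, and you defer the odd case to an unspecified ``re-selection or exchange''. No such repair exists in general, because the statement as printed is false. Take $t=3$ and $M=\set{e_1,e_2,e_3,e_1+e_2+e_3}$: then $\dim\vspan{M}=3$ and $\abs{M}=4=t+1$, yet $M$ is contained in the affine hyperplane of odd-weight vectors, so its affine span is only $2$-dimensional and $M$ cannot contain four affinely independent points. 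Concretely, an affine permutation $T=L+a$ sends the four-element set $M$, whose elements sum to $0$, to a set whose elements sum to $L(0)+4a=0$, while the elements of $\set{0,e_1,e_2,e_3}$ sum to $e_1+e_2+e_3\neq 0$. Re-choosing the base point does not change the affine span of the chosen $t+1$ points, and when $\abs{M}=t+1$ there is nothing left to exchange, so your fallback cannot be carried out.

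The hypothesis that actually makes (a) work is the one you identify in passing: $M$ must not lie in any affine hyperplane $\sett{x\in\binvn{t}}{x\cdot c=1}$, i.e.\ the \emph{affine} span of $M$ (equivalently $\vspan{M+m_0}$ for any $m_0\in M$) must be all of $\binvn{t}$. Under that hypothesis $M$ contains $t+1$ affinely independent points by definition, and your explicit construction of $T$ from them finishes the proof at once. For what it is worth, the paper states this lemma without any proof, so there is nothing to compare against; and in every place the lemma is invoked, $M$ is a maximal Sidon set, where $\threeSums{M}=\binvn{t}$ and every $3$-sum is an affine combination of elements of $M$, so the stronger hypothesis is in fact available and the downstream results survive. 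But neither the lemma as stated nor your argument for it is correct without that strengthening, so the parity obstruction you left open is a real, and in this formulation unfixable, gap.
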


Using \propref{prop:Sidon-equiv} and \lemref{lem:set-obda} we are able to characterise all maximal Sidon sets up to dimension 6.

\begin{prop} \label{prop:max-sidon-charac-small-dim}
  Let $M$ be a maximal Sidon set of $\binvn{t}$ and let $e_1, \dots, e_t$ be the
  standard basis of $\binvn{t}$.
  Then there exists an affine permutation $T\colon\binvn{t}\to \binvn{t}$
  such that $T(M)$ equals to
  \begin{enumerate}
    \item[t=1:] $M_1=\set{0,e_1}=\binvn{1}$ with $\abs{M_1}=2$.
    \item[t=2:] $M_2=\set{0,e_1,e_2}=\binvn{2}\setminzero$ with $\abs{M_2}=3$.
    \item[t=3:] $M_3=\set{0,e_1,e_2,e_3}$ with $\abs{M_3}=4$.
    \item[t=4:] $M_4=\set{0,e_1,e_2,e_3,e_4,e_1+e_2+e_3+e_4}$ with $\abs{M_4}=6$.
    \item[t=5:] $M_5=\set{0,e_1,e_2,e_3,e_4,e_5, e_1+e_2+e_3+e_4}$
                with $\abs{M_5}=7.$
    \item[t=6:] $M_{6a}=\set{0,e_1,e_2,e_3,e_4,e_5,e_6,e_1+e_2+e_3+e_4,e_1+e_2+e_5+e_6}$
                or\\
                $M_{6b}\, =\set{0,e_1,e_2,e_3,e_4,e_5,e_6,e_1+e_2+e_3+e_4+e_5+e_6}$\\
                with $\abs{M_{6a}} = 9 > \abs{M_{6b}} = 8$.
  \end{enumerate}
\end{prop}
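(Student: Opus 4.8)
The plan is to reduce an arbitrary maximal Sidon set to a normal form and then argue dimension by dimension. First I would observe that a maximal Sidon set $M$ must span $\binvn{t}$: if $\vspan{M}\neq\binvn{t}$ then $\threeSums{M}\subseteq\vspan{M}\neq\binvn{t}$, contradicting \propref{prop:max-sidon-charac} (the case $t=1$, where $\abs{M}<3$, is immediate since $\binvn{1}=\set{0,e_1}$ is itself Sidon and is the whole space). Translating by an element of $M$ and invoking \propref{prop:Sidon-equiv}, I may assume $0\in M$; a spanning set containing $0$ has $\abs{M}\geq t+1$, so \lemref{lem:set-obda}(a) lets me assume, after a further affine permutation, that the frame $F:=\set{0,e_1,\dots,e_t}\subseteq M$. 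The decisive structural remark is that every $m\in M\setminus F$ has weight at least $4$: a weight-$2$ vector $e_i+e_j=0+e_i+e_j$ and a weight-$3$ vector $e_i+e_j+e_k$ both lie in $\threeStarSums{F}\subseteq\threeStarSums{M}$, which would then meet $M$ and contradict the Sidon property. Finally, \lemref{lem:set-obda}(b) normalizes one chosen extra element to $e_1+\dots+e_w$.

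For $t\leq 3$ there are no vectors of weight $\geq 4$, so $M=F$, which is Sidon and, admitting no extension, maximal; this yields $M_1,M_2,M_3$. For $t=4$ the only weight-$\geq 4$ vector is $e_1+e_2+e_3+e_4$, and since it is not a $3$-star-sum of $F$, the set $F$ is not maximal while $F\cup\set{e_1+e_2+e_3+e_4}=M_4$ is. For $t=5$ the key computation is that any two extra elements conflict: two distinct weight-$4$ vectors $u,v$ differ in exactly two coordinates, so $u+v=e_i+e_j$; and a weight-$4$ vector $u$ together with the unique weight-$5$ vector $\mathbf 1_5$ satisfies $u+\mathbf 1_5=e_i$, i.e. $u+\mathbf 1_5=e_i+0$. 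Each is a forbidden coincidence of two distinct $2$-star-sums, so at most one extra is allowed; as $F$ alone is extendable, a maximal set has exactly one extra and is then automatically maximal. The weight-$4$ normal form is $M_5$, and the weight-$5$ set $F\cup\set{\mathbf 1_5}$ is affinely equivalent to it (translate by $e_5$ and apply the linear map sending $e_i+e_5\mapsto e_i$), leaving the single class $M_5$.

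Dimension $6$ is the main obstacle, because extras may now legitimately coexist. Here I would first record the admissible interactions: in $\binvn{6}$ two weight-$4$ extras cannot share fewer than $2$ coordinates (their supports would exceed $6$) and cannot share exactly $3$ (then their sum has weight $2$, forcing a forbidden coincidence), so they must share exactly $2$ coordinates, as in $\set{e_1+e_2+e_3+e_4,\,e_1+e_2+e_5+e_6}$; similarly $\mathbf 1_6$ is incompatible with any weight-$4$ or weight-$5$ extra. The plan is then to enumerate, up to affine equivalence and after normalizing one extra by \lemref{lem:set-obda}(b), the possible families of extras, to discard those that are not Sidon, and to close each Sidon family under \propref{prop:sidon-extension} until it is maximal. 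The hard part is the bookkeeping: one must show that every maximal family collapses, after translation and a linear change of basis, onto exactly the two sets $M_{6a}$ (the two-weight-$4$ configuration, of size $9$) and $M_{6b}=F\cup\set{\mathbf 1_6}$ (of size $8$); in particular mixed configurations such as a weight-$5$ together with a weight-$4$ vector must be shown to be affinely equivalent to one of these rather than to spawn a new class. I expect the cleanest finish to verify the two survivors directly via \propref{prop:max-sidon-charac}: that $\twoStarSums{M_{6a}}$ and $\twoStarSums{M_{6b}}$ attain the full size $\binom{\abs{M}}{2}$ (Sidon), and that $\threeStarSums{M}\dotcup M=\binvn{6}$ (maximal), the latter amounting to the observation that every weight-$4$, $5$ and $6$ vector is a $3$-star-sum of the respective set. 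Since $\abs{M_{6a}}\neq\abs{M_{6b}}$, these two classes are genuinely distinct.
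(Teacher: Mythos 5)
Your overall strategy is exactly the paper's: show a maximal Sidon set spans, normalise via Proposition~\ref{prop:Sidon-equiv} and Lemma~\ref{lem:set-obda}(a) so that $\set{0,e_1,\dots,e_t}\subseteq M$, observe that this forbids further elements of weight $2$ or $3$ so every extra element has weight at least $4$, dispose of $t\le 4$ immediately, and then for $t=5,6$ run a case distinction on the weights of the extra elements (normalised by Lemma~\ref{lem:set-obda}(b)) followed by explicit affine identifications of the resulting classes. Your treatment of $t\le 5$ is correct and, for $t=5$, your conflict computation (two weight-$4$ extras sum to a weight-$2$ vector; a weight-$4$ extra plus $\mathbf 1_5$ is a basis vector) together with the translation-by-$e_5$ equivalence is precisely the paper's $T_5=L_5+e_5$.

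The gap is that for $t=6$ you only state a plan. This is where the substance of the proposition lies, and two things are missing. First, the actual enumeration: you need to show that the only maximal configurations of extras are (i) two weight-$4$ vectors whose supports meet in exactly two coordinates, (ii) one weight-$5$ vector together with one weight-$4$ vector (the paper's $M_{6a2}$, e.g.\ $e_1+e_2+e_3+e_4+e_5$ with $e_{j_1}+e_{j_2}+e_{j_3}+e_6$), and (iii) the single vector $\mathbf 1_6$ --- and in particular that no configuration with three or more extras survives, and that in cases (i) and (ii) the set of size $9$ really is maximal, i.e.\ $\threeSums{M}=\binvn{6}$. Second, the explicit affine maps: you acknowledge that the mixed weight-$5$/weight-$4$ class must collapse onto $M_{6a}$ but do not produce the transformation; the paper does this by first permuting coordinates to reach $M'_{6a}=\set{0,e_1,\dots,e_6,e_1+e_2+e_3+e_4+e_5,e_1+e_2+e_5+e_6}$ and then applying $T_6=L_6+e_5$ with $L_6\colon e_i\mapsto e_i+e_5$ for $i\ne 5$, $e_5\mapsto e_5$. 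Without these two pieces you cannot conclude that there are exactly the two classes $M_{6a}$ and $M_{6b}$; everything up to that point is sound, so completing the $t=6$ bookkeeping is what remains.
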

\begin{proof}
  Let $M\subseteq\binvn{t}$ be maximal Sidon.
  Because of \propref{prop:Sidon-equiv} and \lemref{lem:set-obda} (a)
  we may assume without loss of generality
  that $\set{0, e_1, \dots, e_k} \subseteq M \subseteq \vspan{e_1, \dots, e_k}$ with $k\in\set{t-1,t}$.
  From $M$ maximal and \propref{prop:max-sidon-charac} follows $k=t$ and $\threeSums{M} = \binvn{t}$.
  Therefore $M$ contains no elements of weight 2 or 3.
  Hence $\binvn{t}\setminus\threeSums{M}$ consists of vectors
  of weight $\geq 4$ and the cases up to $t=4$ are shown.
  A case analysis of the maximal weight for vectors in $M$ leads, together with
  \lemref{lem:set-obda} (b) and straight forward calculations,
  to the remaining cases:
  \begin{enumerate}
    \item[t=5:]
    $M_5=\set{0,e_1,e_2,e_3,e_4,e_5, e_1+e_2+e_3+e_4}$ or\\
    $M'_5=\set{0,e_1,e_2,e_3,e_4,e_5, e_1+e_2+e_3+e_4+e_5}$\\
    with $\abs{M_5}=\abs{M'_5}=7.$
    \item[t=6:]
    $M_{6a1}=\set{0,e_1,e_2,e_3,e_4,e_5,e_6,e_1+e_2+e_3+e_4,e_{i_1}+e_{i_2}+e_5+e_6}$,\\
    $M_{6a2}=\set{0,e_1,e_2,e_3,e_4,e_5,e_6,e_1+e_2+e_3+e_4+e_5,e_{j_1}+e_{j_2}+e_{j_3}+e_6}$ or
    $M_{6b} =\set{0,e_1,e_2,e_3,e_4,e_5,e_6,e_1+e_2+e_3+e_4+e_5+e_6}$\\
    with distinct $i_1,i_2\in\set{1,2,3,4}$, pairwise distinct $j_1,j_2,j_3\in\set{1,2,3,4,5}$ \\and
    $\abs{M_{6a1}} = \abs{M_{6a2}}=9 > \abs{M_{6c}} = 8$.
  \end{enumerate}
  Now we show that some of the cases above can be transformed
  into each other via affine transformations.
  The affine transformation $T_5 = L_5 + e_5$ on $\binvn{5}$ fulfils $T_5(M'_5)=M_5$
  where the used linear transformation $L_5\colon\binvn{5}\to \binvn{5}$ is defined by:
  \begin{equation*}
    e_1\mapsto e_1+e_5 \quad
    e_2\mapsto e_2+e_5 \quad
    e_3\mapsto e_3+e_5 \quad
    e_4\mapsto e_4+e_5 \quad
    e_5\mapsto e_5.
  \end{equation*}
  Simple permutations of the standard basis  $e_1, \dots, e_t$
  result in affine
  transformations $T_{61}, T_{62}\colon\binvn{6}\to \binvn{6}$
  such that
  \begin{align*}
      T_{61}(M_{6a1}) = M_{6a}& \text{ and}\\
      T_{62}(M_{6a2}) = M'_{6a}&
      = \set{0,e_1,e_2,e_3,e_4,e_5,e_6,e_1+e_2+e_3+e_4+e_5,\\
      &\qquad e_1+e_2+e_5+e_6}.
  \end{align*}
  Extending the definition of $L_5$ by $e_6\mapsto e_6+e_5$
  leads to a linear transformation $L_6\colon\binvn{6}\to \binvn{6}$
  and results in an affine transformation $T_6 = L_6 + e_5$ on $\binvn{6}$ which fulfils $T_6(M'_{6a})=M_{6a}$.
\end{proof}

For dimension 7 and 8 we were not able to classify all maximal Sidon sets,
but determined all possible sizes of maximal Sidon sets by computer calculations.
\begin{prop} \label{prop:max-sidon-calc}
    Let $M$ be a maximal Sidon set of $\binvn{t}$.
    If $t=7$, then $\abs{M} = 12$ and if $t=8$, then $\abs{M} \in\set{15,16,18}$.
\end{prop}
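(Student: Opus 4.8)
The plan is to turn the statement into a finite, symmetry-reduced exhaustive search and simply read off which sizes occur. First I would observe that a maximal Sidon set $M$ must span $\binvn{t}$: if $\vspan{M}$ were a proper subspace, then $\threeSums{M}\subseteq\vspan{M}$, so any $g\in\binvn{t}\setminus\vspan{M}$ would satisfy $g\notin\threeSums{M}$ and could be adjoined by \propref{prop:sidon-extension}, contradicting maximality. By the maximality count $\abs{\binvn{t}\setminus M}=\abs{\threeStarSums{M}}\leq\binom{\abs{M}}{3}$ coming from \propref{prop:max-sidon-charac}(c), the size $\abs{M}$ is forced to be large (for $t=7$ at least $10$, for $t=8$ at least $13$), in particular $\abs{M}\geq t+1$. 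Hence \lemref{lem:set-obda}(a) together with \propref{prop:Sidon-equiv} lets me assume without loss of generality that $\set{0,e_1,\dots,e_t}\subseteq M$; exactly as in the proof of \propref{prop:max-sidon-charac-small-dim}, every further element of $M$ then has weight at least $4$.

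Next I would enumerate the admissible extra elements by backtracking. Starting from the frame $B=\set{0,e_1,\dots,e_t}$, I maintain the forbidden set $\threeSums{M}$ and, following \propref{prop:sidon-extension}, only ever add a weight-$\geq 4$ vector lying in $\binvn{t}\setminus\threeSums{M}$, updating the forbidden set after each addition. A node of this search tree is a maximal Sidon set exactly when $\binvn{t}\setminus\threeSums{M}=\emptyset$, i.e.\ $\threeSums{M}=\binvn{t}$, which is the maximality criterion of \propref{prop:max-sidon-charac}(b). To keep the search exhaustive but small I would prune by symmetry: the affine permutations fixing $B$ act as the full symmetric group on the $t+1$ affinely independent points of $B$, and by \propref{prop:Sidon-equiv} it suffices to extend $M$ only by orbit representatives of the candidate vectors under this group (so that each orbit of partial sets is visited once via canonical augmentation). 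Note that this reduction is consistent with the weight restriction, since any Sidon set containing $B$ has all its extra elements of weight $\geq 4$, and the frame stabiliser preserves the property of being Sidon and of containing $B$. Recording $\abs{M}$ at every maximal node then yields the complete list of sizes.

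For bracketing and as a sanity check, the trivial upper bound of \propref{prop:max-sidon-upper-bound} gives $\abs{M}\leq 16$ for $t=7$ and $\abs{M}\leq 23$ for $t=8$, while the covering inequality above supplies the lower bounds; the search is expected to collapse these ranges to $\set{12}$ for $t=7$ and to $\set{15,16,18}$ for $t=8$. I would also use the observation that a maximum-size Sidon set is in particular maximal, so the largest value found must equal $\smax{t}$, which identifies $\smax{7}=12$ and $\smax{8}=18$ and can be cross-validated against the code-theoretic computations elsewhere in the paper.

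The main obstacle is purely computational rigour for $t=8$: although the weight-$\geq 4$ candidate pool contains only $163$ vectors, the number of partial Sidon sets is large, so the argument stands or falls on (i) aggressive pruning, exploiting that $\threeSums{M}$ grows quickly and sharply shrinks the candidate list after each step, and (ii) a provably correct symmetry reduction, so that no orbit of maximal sets is skipped and none is counted under a spurious isomorphism. Making the orbit enumeration demonstrably exhaustive, rather than merely heuristic, is the delicate point; by contrast the Sidon test, the extension test and the maximality test are all elementary once \propref{prop:sidon-extension} and \propref{prop:max-sidon-charac} are in hand.
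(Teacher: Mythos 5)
Your proposal is essentially the paper's own approach: the paper likewise proves this proposition by an exhaustive computer search that normalises $M$ to contain $\set{0,e_1,\dots,e_t}$ via \propref{prop:Sidon-equiv} and \lemref{lem:set-obda}(a), extends by elements of $\binvn{t}\setminus\threeSums{M}$ using \propref{prop:sidon-extension}, and tests maximality via $\threeSums{M}=\binvn{t}$. The only difference is the pruning for $t=8$: you propose canonical augmentation under the frame stabiliser $S_{t+1}$, whereas the paper splits into five subtasks by the maximal weight of an element using \lemref{lem:set-obda}(b); both are legitimate symmetry reductions of the same search.
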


Examples of maximal Sidon sets with the sizes from
\propref{prop:max-sidon-calc} can be found in
\tabref{tab:maxSidonSetExamples}. We use the standard integer representation of vectors
in $\binvn{t}$: the integer $\sum_{i=0}^{t-1} a_i2^i$ in 2-adic representation
``is'' the vector $(a_0,\ldots, a_{t-1})$.

\begin{table}[h!]
  \moreSpaceBetweenRows{1.2}
  \centering
  \begin{tabular}{@{}ccl@{}} \toprule

    $t$ & $\abs{M}$ & $M$ \\
    \midrule
    7 & 12 &
    $\set{0, 1, 2, 4, 8, 16, 32, 64, 15, 60, 101, 87}$ \\
    8 & 15 &
    $\set{0, 1, 2, 4, 8, 16, 32, 64, 128, 29, 58, 116, 135, 223, 236}$ \\
    8 & 16 &
    $\set{0, 1, 2, 4, 8, 16, 32, 64, 128, 29, 58, 116, 232, 205, 135, 222}$ \\
    8 & 18 &
    $\set{0, 1, 2, 4, 8, 16, 32, 64, 128, 29, 58, 116, 232, 205, 135, 254, 91, 171}$ \\
    \bottomrule
  \end{tabular}

  \caption{ \label{tab:maxSidonSetExamples}
    Examples of maximal Sidon sets $M$ in $\binvn{t}$
    of all possible sizes for dimension $t=7$ and $t=8$.
  }
\end{table}

\begin{rem}
  Here are some details about the computer calculations used in  \propref{prop:max-sidon-calc}:
  \begin{enumerate}
    \item
      The algorithm is based on \propref{prop:sidon-extension}:
      if $M\subseteq\binvn{t}$ is Sidon and $g\in\binvn{t}\setminus\threeSums{M}$, then $M\dotcup\set{g}$ is Sidon.
    \item
      Because of \propref{prop:Sidon-equiv} and \lemref{lem:set-obda} (a)
      we may assume without loss of generality that
      $0, e_1, \dots, e_t$ is contained
      in any maximal Sidon set in $\binvn{t}$.
    \item
      For dimension 7, the assumption from (b) is sufficient
      to complete the calculations after 12 seconds.
      As a result, we get 524160 maximal Sidon sets of size 12
      containing $0, e_1, \dots, e_7$.
    \item
      For dimension 8,  the assumption from (b) is still not sufficient to complete the calculations.
      We divided the calculation into 5 subtasks
      with the help of \lemref{lem:set-obda} (b):
      let $M$ be a maximal Sidon set containing $0, e_1, \dots, e_t$.
      Then the maximal weight of all elements of $M$ is either 4,5,6,7 or 8
      and we can assume without loss of generality,
      because of the Sidon property, that
      \begin{enumerate}[(1)]
        \item
        for the maximal weight 4, $e_1+e_2+e_3+e_4$ is contained in $M$
        and the weight of all other elements is at most 4;
        \item
        for the maximal weight 5, $e_1+e_2+e_3+e_4+e_5$ is contained in $M$
        and the weight of all other elements is at most 5;
        \item
        for the maximal weight 6, $e_1+e_2+\dotsb+e_5+e_6$ is contained in $M$
        and the weight of all other elements is at most 6;
        \item
        for the maximal weight 7, $e_1+e_2+\dotsb+e_6+e_7$ is contained in $M$
        and the weight of all other elements is at most 6;
        \item
        for the maximal weight 8, $e_1+e_2+\dotsb+e_7+e_8$ is contained in $M$
        and the weight of all other elements is at most 5.
      \end{enumerate}
      Task (5) was the most time-consuming and took about 14 days without
      parallelisation.
  \end{enumerate}
\end{rem}

We close this section with a result on the 4{-}sums of Sidon sets.
As seen before in \propref{prop:max-sidon-charac},
a maximal Sidon set can be characterised via
its 3{-}sums, i.e. a Sidon set $M$ is maximal if and only if $\threeSums{M} = \binvn{t}$. For sufficiently large  Sidon sets we obtain the following result about the
4{-}sums of Sidon sets.
It is used later in \propref{thm:sidon-covering-radius} in connection with linear codes to indicate the covering radius of the code
associated with a sum-free Sidon set.

\begin{thm} \label{thm:large-sidon-set-properties}
  Let $M$ be a Sidon set of $\binvn{t}$. If $\abs{M} > \smax{t-1}$, then
  \begin{enumerate}
    \item $\fourSums{M} = \binvn{t}$ and
    \item $\dim \vspan{M} = \dim \vspan{\twoStarSums{M}} = t$.
  \end{enumerate}
\end{thm}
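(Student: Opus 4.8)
The plan is to establish the two assertions independently, each by reducing to the defining maximality of $\smax{t-1}$ after transporting $M$ into a $(t-1)$-dimensional space: for part~(a) the transport is a linear quotient, and for part~(b) it is the passage to the affine span of $M$.

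For part~(a) I would argue by contradiction. Suppose some $v\in\binvn{t}$ is not a $4$-sum of $M$; since $0\in\fourSums{M}$ we have $v\ne 0$, so $\vspan{v}$ is a line and the quotient $\binvn{t}/\vspan{v}$ is isomorphic to $\binvn{t-1}$. Let $q\colon\binvn{t}\to\binvn{t}/\vspan{v}$ be the canonical map. The goal is to show that $q(M)$ is a Sidon set with $\abs{q(M)}=\abs{M}$, contradicting $\abs{M}>\smax{t-1}$. First, $q$ is injective on $M$: if $q(m_1)=q(m_2)$ with $m_1\ne m_2$, then $m_1+m_2=v$, so $v\in\twoStarSums{M}\subseteq\fourSums{M}$ (using $\fourSums{M}=\fourStarSums{M}\cup\twoStarSums{M}\cup\set{0}$), contradicting the choice of $v$; hence $\abs{q(M)}=\abs{M}$. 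Second, $q(M)$ is Sidon: a violation would give pairwise distinct $m_1,m_2,m_3,m_4\in M$ (distinct since $q$ is injective on $M$) with $q(m_1)+q(m_2)=q(m_3)+q(m_4)$, i.e.\ $m_1+m_2+m_3+m_4\in\vspan{v}=\set{0,v}$. If this sum is $0$ then $m_1+m_2=m_3+m_4$ contradicts that $M$ is Sidon; if it is $v$ then $v\in\fourStarSums{M}\subseteq\fourSums{M}$, again a contradiction. This completes part~(a).

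For part~(b), since every element of $\twoStarSums{M}$ lies in $\vspan{M}$ we have $\vspan{\twoStarSums{M}}\subseteq\vspan{M}$, hence $\dim\vspan{\twoStarSums{M}}\le\dim\vspan{M}\le t$; it therefore suffices to show $\dim\vspan{\twoStarSums{M}}=t$. Fixing any $m_0\in M$ and writing $M+m_0=\sett{m+m_0}{m\in M}$, one checks that $\vspan{\twoStarSums{M}}=\vspan{M+m_0}$ and $M\subseteq m_0+\vspan{\twoStarSums{M}}$, so $M$ lies in an affine subspace of dimension $\dim\vspan{\twoStarSums{M}}$. If this dimension were $\le t-1$, then $M$ would lie in some affine hyperplane $H$; choosing an affine permutation $T$ of $\binvn{t}$ carrying $H$ onto a coordinate subspace identified with $\binvn{t-1}$, \propref{prop:Sidon-equiv} shows $T(M)$ is Sidon in $\binvn{t-1}$, whence $\abs{M}=\abs{T(M)}\le\smax{t-1}$, a contradiction. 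Thus $\dim\vspan{\twoStarSums{M}}=t=\dim\vspan{M}$.

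The step I expect to require the most care is part~(a), and specifically the decision to quotient rather than to count. The tempting alternative is pigeonhole: $v\in\fourSums{M}$ is equivalent to $\twoSums{M}\cap(v+\twoSums{M})\ne\emptyset$, and two translates of $\twoSums{M}$ must meet once $2\abs{\twoSums{M}}>2^t$, i.e.\ $\binom{\abs{M}}{2}\ge 2^{t-1}$. Under the mere hypothesis $\abs{M}>\smax{t-1}\approx 2^{t/2}$ this inequality sits right at the threshold and need not hold, so the purely metric argument is too weak; the quotient construction is what converts the hypothesis into a genuine contradiction without any size estimate. The remaining subtlety, easily checked, is to confirm that the only two ways $q(M)$ could fail to be Sidon correspond exactly to the excluded values $0$ and $v$ of the $4$-sum.
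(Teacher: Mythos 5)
Your proof of part~(a) is correct and is essentially the paper's argument in different clothing: the paper fixes $b$ with $b\cdot a=1$ and projects via $g\mapsto g+(b\cdot g)a$ onto the hyperplane $b^{\orth}$, which is precisely a concrete realisation of your quotient $\binvn{t}\to\binvn{t}/\vspan{v}$ (the map is linear with kernel $\set{0,a}$), and the two case distinctions (injectivity on $M$ via $\twoStarSums{M}$, Sidon-ness of the image via $\fourStarSums{M}$) match exactly. For part~(b) you give an independent and valid argument through the affine span of $M$, whereas the paper intends the shorter deduction from~(a): once $\fourSums{M}=\binvn{t}$, every vector is a sum of two elements of $\twoSums{M}$, so $\binvn{t}=\fourSums{M}\subseteq\vspan{\twoStarSums{M}}\subseteq\vspan{M}$ and both dimensions equal $t$ at once; your route costs a little more but has the mild advantage of not depending on~(a). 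Your closing remark about why a pigeonhole count fails is accurate and explains why both you and the authors reach for the dimension-reduction contradiction.
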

\begin{proof}
  We only prove (a) as (b) is a direct consequence of it.

  Assume that there exists an $a\in\binvn{t}\setminus\fourSums{M}$.
      Let  $b\in\binvn{t}\setminus a^{\orth}$ and
      $f_b \colon \binvn{t}\to \binf$ be defined by $g\mapsto b\cdot g$, where $\cdot$ denotes the standard inner product, and $^\orth$ denotes the orthogonal space with respect to this inner
product.
      We now consider the mapping $T\colon\binvn{t}\to\binvn{t}$ defined by $g\mapsto g + f_b(g) a$.
      From $b\cdot a = 1$, it follows
      $\binvn{t} = b^{\orth} \dotcup (a + b^{\orth})$
      and
      $T(g) = \begin{cases}
                g      &\text{ for } g \in b^{\orth},\\
                g + a  &\text{ for } g = a+b^{\orth},
              \end{cases}
      $
      hence $T(\binvn{t})\subseteq b^{\orth}$.
      Assuming $T(m_1)=T(m_2)$ for distinct $m_1,m_2\in M$ would lead to
      $m_1+m_2=a$, but this contradicts $a\notin \fourSums{M} = \fourStarSums{M} \cup \twoSums{M}$. Hence $\abs{T(M)}=\abs{M}$.
      Assuming $T(m_1)+T(m_2)=T(m_3)+T(m_4)$ would lead because of the Sidon property of $M$ to $m_1+m_2+m_3+m_4=a$ but this contradicts $a\notin\fourStarSums{M}$. Hence $T(M)$ is Sidon.
      But now we found a Sidon set $T(M)\subseteq b^{\orth}$ with
      $\abs{T(M)} > \smax{t-1}$, a contradiction.
\end{proof}

Note that the opposite direction of \thmref{thm:large-sidon-set-properties} is, in general, not true. For instance,
$M=\set{0, e_1, \dots, e_t}\subseteq\binvn{t}$ is a Sidon set with
$\dim \vspan{M} = \dim \vspan{\twoStarSums{M}} = t$
but $\fourSums{M}\subsetneq \binvn{t}$ and $\abs{M}\leq \smax{t-1}$
for $t\geq 5$.

\section{Sum-free sets}
\label{sec:sum-free-sets}

In this section we introduce
sum{-}free sets, give some basic properties
and show
that it is equivalent to discuss the maximum size
of a sum{-}free Sidon set instead of a Sidon set.
Then, in the next section, we recall a one-to-one correspondence between
sum{-}free Sidon sets and
linear codes with a minimum distance greater than or equal to 5,
which gives us the possibility to translate the question
about the maximum size of a Sidon set
into a question about linear codes with certain properties.

\begin{defn}
  Let $M$ be a subset of $\binvn{t}$. $M$ is called \mathKeyword{sum-free}
  if $m_1+m_2\ne m_3$ for all $m_1,m_2,m_3\in M$.
\end{defn}

By definition, $0$ is never contained in a sum{-}free set.
We give some basic properties:
{\it
  Let $M$ be a subset of $\binvn{t}$.
  \begin{enumerate}
    \item $M$ is sum-free if and only if $\twoSums{M} \cap M = \emptyset$.
    \item If $M$ is sum-free, then $\abs{M}\leq 2^{t-1}$.
    \item $M$ is sum-free and $\abs{M} = 2^{t-1}$ if and only if
    $M=H+a$ for a hyperplane $H$ of $\binvn{t}$
    (which is a linear subspace of dimension $t-1$)
    and $a\in\binvn{t}\setminus H$.
  \end{enumerate}
}
More on sum{-}free sets can be found in the survey papers of Green and Ruzsa
\cite{Green2005} as well as Tao and Vu \cite{tao17}.

The Sidon property and the maximal Sidon property are
invariant under the action of the affine group.
This is not true, in general, for
the property of being sum{-}free.
But being sum{-}free  is still invariant under the action of the general linear group:

\begin{prop} \label{prop:sum-free-equiv}
  Let $M$ be a subset of $\binvn{t}$,
  $L\colon\binvn{t}\to \binvn{t}$ be a linear permutation
  and $a\in\binvn{t}$.
  Then
  \begin{enumerate}
    \item $M$ is sum-free if and only if $L(M)$ is sum-free.
    \item $M+a$ is sum-free if and only if
    $a\in\binvn{t}\setminus\threeSums{M} =
    \binvn{t}\setminus\bigl(\threeStarSums{M}\cup M\bigr)$.
  \end{enumerate}
\end{prop}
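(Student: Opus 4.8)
The plan is to handle the two parts separately; both reduce to short computations in characteristic $2$, so the work is mostly bookkeeping.

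For part (a), I would lean on the earlier characterisation that $M$ is sum-free if and only if $\twoSums{M}\cap M=\emptyset$. Since $L$ is linear it commutes with addition, so $\twoSums{L(M)}=L(\twoSums{M})$, and since $L$ is a bijection, $\twoSums{L(M)}\cap L(M)=L(\twoSums{M}\cap M)$, which is empty exactly when $\twoSums{M}\cap M$ is empty. Equivalently, and perhaps more transparently, I would argue pointwise: for $m_1,m_2,m_3\in M$ the equation $L(m_1)+L(m_2)=L(m_3)$ is $L(m_1+m_2)=L(m_3)$, which by injectivity of $L$ is $m_1+m_2=m_3$. Thus a triple violating the sum-free property for $L(M)$ corresponds bijectively to one for $M$, and the equivalence follows.

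For part (b), I would compute directly when $M+a$ fails to be sum-free. By definition this happens precisely when there exist $m_1,m_2,m_3\in M$ with $(m_1+a)+(m_2+a)=(m_3+a)$. In characteristic $2$ the left-hand side collapses to $m_1+m_2$, so the failure condition becomes $m_1+m_2+a=m_3$, that is, $a=m_1+m_2+m_3$ for some $m_1,m_2,m_3\in M$. Hence $M+a$ is sum-free if and only if $a\notin\threeSums{M}$, i.e. $a\in\binvn{t}\setminus\threeSums{M}$. The stated rewriting as $\binvn{t}\setminus\bigl(\threeStarSums{M}\cup M\bigr)$ is then just the characteristic-$2$ identity $\threeSums{M}=\threeStarSums{M}\cup M$ recorded among the basic properties earlier.

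I do not expect a genuine obstacle here; the only subtlety is that the definition of sum-free imposes no distinctness condition on $m_1,m_2,m_3$, so I must keep all three indices ranging freely over $M$ throughout — including the diagonal cases where some of them coincide. It is precisely this freedom that makes the obstruction set the full $3$-sums $\threeSums{M}$ rather than the $3$-star-sums $\threeStarSums{M}$, and tracking it correctly is the one place where care is needed.
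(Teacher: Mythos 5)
Your argument is correct and matches the paper's own proof: part (a) is handled by the linearity and injectivity of $L$, and part (b) by observing that in characteristic $2$ the failure of $M+a$ to be sum-free reduces to $a=m_1+m_2+m_3$, i.e.\ $a\in\threeSums{M}$. Your closing remark about keeping the indices unrestricted (so that the obstruction is $\threeSums{M}$ rather than $\threeStarSums{M}$) is exactly the right point of care.
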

\begin{proof}
  (a) follows directly from the definition of sum{-}free and
  from the linearity and bijectivity of $L$.
  In order to show (b) we assume that $M+a$ is not sum{-}free.
  Hence there exist $m_1,m_2,m_3\in M$ such that $m_1 + a + m_2 + a = m_3 + a$,
  thus $m_1 + m_2 + m_3 = a$.
  But this is equivalent to $a\in\threeSums{M} =
   (\threeStarSums{M}\cup M)$ and (b) is shown.
\end{proof}

The next Proposition is about extending  sum{-}free sets
and sum{-}free Sidon sets by adding elements.

\begin{prop}\label{prop:sum-free-sidon-extension}
  Let $M$ be a subset of $\binvn{t}$ and $g\in\binvn{t}\setminus M$.
  \begin{enumerate}
    \item Let $M$ be sum-free.
         Then $M\dotcup\set{g}$ is sum-free if and only if
         $g\in\binvn{t}\setminus\twoSums{M}$.
    \item Let $M$ be sum-free Sidon.
         Then $M\dotcup\set{g}$ is sum-free Sidon if and only if
         $g\in\binvn{t}\setminus\big(\threeSums{M}\cup\twoSums{M}\big)$.
    \item Let $M$ be sum-free Sidon.
         Then $M\dotcup\set{g}$ is Sidon and not sum-free if and only if
         $g\in\twoSums{M}\setminus\threeSums{M}$.
  \end{enumerate}
\end{prop}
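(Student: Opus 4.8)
The plan is to prove all three parts by reducing each statement to a membership condition using the characterizations already established, namely that $M$ sum-free is equivalent to $\twoSums{M}\cap M=\emptyset$, that $M$ Sidon means $\threeStarSums{M}\cap M=\emptyset$, and the extension results in \propref{prop:sidon-extension} and \propref{prop:sum-free-equiv}. For part (a), I would argue that since $M$ is sum-free and $g\notin M$, the set $M\dotcup\set{g}$ fails to be sum-free precisely when some new sum relation appears involving $g$. The relations to check are of the form $x+y=z$ with $x,y,z\in M\cup\set{g}$; those entirely inside $M$ are already excluded by the sum-freeness of $M$. A relation involving $g$ reduces (using characteristic $2$, where $g+g=0\notin M\cup\set{g}$ causes no issue) to either $g=m_1+m_2$ for $m_1,m_2\in M$, or $g+m_1=m_2$, i.e. again $g=m_1+m_2$, so in every case the obstruction is exactly $g\in\twoSums{M}$. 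Hence $M\dotcup\set{g}$ is sum-free iff $g\in\binvn{t}\setminus\twoSums{M}$.

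For part (b) I would combine \propref{prop:sidon-extension} with part (a). By \propref{prop:sidon-extension}, $M\dotcup\set{g}$ is Sidon iff $g\in\binvn{t}\setminus\threeSums{M}$; by part (a) it is additionally sum-free iff $g\in\binvn{t}\setminus\twoSums{M}$. Since $M\dotcup\set{g}$ is sum-free Sidon exactly when both conditions hold, the admissible $g$ are those lying in the intersection of the two complements, that is $g\in\binvn{t}\setminus\bigl(\threeSums{M}\cup\twoSums{M}\bigr)$, which is the claim. The only subtlety is to note that \propref{prop:sidon-extension} is stated for $M$ Sidon and $g\notin M$, both of which hold here, and that part (a) requires $M$ sum-free, which is part of the hypothesis.

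For part (c), the statement separates Sidon-ness from sum-freeness, so I would again invoke \propref{prop:sidon-extension} for the Sidon condition ($g\notin\threeSums{M}$) and the negation of part (a) for the failure of sum-freeness ($g\in\twoSums{M}$). Thus $M\dotcup\set{g}$ is Sidon but not sum-free iff $g\in\twoSums{M}$ and $g\notin\threeSums{M}$, i.e. $g\in\twoSums{M}\setminus\threeSums{M}$. Here one should check consistency: this set is nonempty only when a $2$-sum can avoid being a $3$-sum, but that is not needed for the logical equivalence itself.

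The main obstacle, and the part deserving the most care, is the reduction in part (a): one must enumerate the possible new sum relations created by adjoining $g$ and confirm that each collapses to the single condition $g\in\twoSums{M}$, being careful with the characteristic-$2$ conventions (in particular that $\twoSums{M}=\twoStarSums{M}\cup\set{0}$ includes $0$, and that the case $m_1=m_2$ forces $g=0$, which is harmless since $0\in\twoSums{M}$ anyway). Once part (a) is firmly in place, parts (b) and (c) are immediate Boolean combinations of part (a) with \propref{prop:sidon-extension}, requiring only that the hypotheses of each cited proposition are met.
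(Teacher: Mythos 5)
Your proposal is correct and follows essentially the same route as the paper: part (a) by checking that any new sum relation created by adjoining $g$ collapses to $g\in\twoSums{M}$ (with the $g=0$ case absorbed by $0\in\twoSums{M}$), and parts (b) and (c) as Boolean combinations of part (a) with Proposition~\ref{prop:sidon-extension}. Your treatment of part (a) is in fact slightly more explicit about the case enumeration than the paper's.
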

\begin{proof}
Let $M$ be sum-free and $g\in\binvn{t}\setminus M$.
We assume that $M\dotcup\set{g}$ is not sum{-}free.
Hence there exist $m_1,m_2\in M\dotcup\set{g}$ such that $m_1+m_2 = g$.
But this is equivalent to either $g\in\twoStarSums{M}$ or $g=0$,
hence $g\in\twoSums{M} = \twoStarSums{M} \cup \set{0}$ and (a) is shown.
Cases (b) and (c) follow from (a) and \propref{prop:sidon-extension}.
\end{proof}

The case when 0 is contained in a Sidon set is of special interest.

\begin{prop}\label{prop:sum-free-sidon-extension-0}
  Let $M$ be a subset of $\binvn{t}$.
  \begin{enumerate}
    \item If $M$ is sum-free Sidon, then
    $M\dotcup \set{0}$ is Sidon and not sum-free.
    \item If $M$ is Sidon and $0\in M$,
    then $M\setminzero$ is sum{-}free Sidon.
  \end{enumerate}
\end{prop}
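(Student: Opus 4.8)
The plan is to treat the two parts separately, leaning on \propref{prop:sidon-extension} and on the basic facts about sum-free sets recorded earlier. For part (a) I would first note that $0\notin M$, since by the remark preceding the proposition $0$ is never contained in a sum-free set; hence $M\dotcup\set{0}$ is a genuine disjoint union and the statement is well posed. That $M\dotcup\set{0}$ fails to be sum-free is then immediate from the same remark, because any set containing $0$ is not sum-free.

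For the Sidon assertion in (a) I would apply \propref{prop:sidon-extension} with $g=0$. Since $0\in\binvn{t}\setminus M$, that proposition tells us $M\dotcup\set{0}$ is Sidon if and only if $0\notin\threeSums{M}=\threeStarSums{M}\dotcup M$. Now $0\notin M$ has already been observed, and $0\in\threeStarSums{M}$ would supply pairwise distinct $m_1,m_2,m_3\in M$ with $m_1+m_2+m_3=0$, i.e.\ $m_1+m_2=m_3$, contradicting sum-freeness. Therefore $0\notin\threeSums{M}$ and $M\dotcup\set{0}$ is Sidon.

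For part (b) the Sidon property of $M\setminus\set{0}$ is inherited directly from $M$, as every subset of a Sidon set is Sidon straight from the definition. The real content is sum-freeness, which I would prove by contradiction: suppose $m_1+m_2=m_3$ for some $m_1,m_2,m_3\in M\setminus\set{0}$. If $m_1=m_2$ then $m_3=0$, contrary to $m_3\neq 0$, so $m_1\neq m_2$. Rewriting the relation as $m_1+m_2=m_3+0$ and checking that $m_1,m_2,m_3,0$ are pairwise distinct---using $m_1,m_2,m_3\neq 0$ together with $m_1\neq m_2$---this equation violates the Sidon property of $M$, since $M$ contains all four elements. Hence no such relation exists and $M\setminus\set{0}$ is sum-free.

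Both arguments are short. The only step demanding any care is the case split in (b): one must dispose of $m_1=m_2$ separately, where the sum collapses to $0$ rather than yielding a forbidden Sidon quadruple, before invoking pairwise distinctness. I do not anticipate any genuine obstacle beyond this bookkeeping.
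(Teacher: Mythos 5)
Your proposal is correct and follows essentially the same route as the paper: part (b) is verbatim the paper's argument (exhibiting the forbidden quadruple $m_1,m_2,m_3,0$), and part (a) differs only in that you invoke Proposition~\ref{prop:sidon-extension} with $g=0$ plus the remark that sets containing $0$ are never sum-free, whereas the paper cites Proposition~\ref{prop:sum-free-sidon-extension}(c) with $0\in\twoSums{M}\setminus\threeSums{M}$ --- which is the same computation one level up the citation chain.
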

\begin{proof}
  (a) is a direct consequence of \propref{prop:sum-free-sidon-extension} (c)
  due to $0\in\twoSums{M}\setminus\threeSums{M}$.

  Now we consider (b). Since every subset of a Sidon set is Sidon, it  remains to show
  that $M\setminzero$ is sum{-}free.
  We assume that $M\setminzero$ is not sum{-}free. Hence, there exist
  $m_1,m_2,m_3\in M\setminzero$ such that $m_1+m_2=m_3$.
  Since $0\notin M\setminzero$ it follows that $m_1,m_2,m_3$ are
  pairwise distinct. But then, we found $m_1,m_2,m_3,0\in M$ pairwise distinct
  such that $m_1+m_2=m_3+0$ which contradicts $M$ Sidon and (b) is shown.
\end{proof}

Therefore, the problem to find   the maximum size
of a sum{-}free Sidon set is equivalent to find the maximum size  of a Sidon set.

\begin{prop}\label{prop:smax-sfsmax}
  Let $\sfsmax{t}$ denote the maximum size of a sum{-}free Sidon set
  in $\binvn{t}$. Then
  \[
  \smax{t} = \sfsmax{t}+1.
  \]
\end{prop}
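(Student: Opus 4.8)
The plan is to establish the equality by proving the two inequalities $\smax{t} \geq \sfsmax{t}+1$ and $\smax{t} \leq \sfsmax{t}+1$ separately, and in both directions the whole work is already encapsulated in \propref{prop:sum-free-sidon-extension-0}, which transfers between Sidon sets containing $0$ and sum-free Sidon sets by adjoining or deleting the zero vector. The only extra ingredient I expect to need is the translation-invariance of the Sidon property recorded in \propref{prop:Sidon-equiv}(a), used to guarantee that a maximum Sidon set can be assumed to contain $0$.

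For the inequality $\smax{t} \geq \sfsmax{t}+1$, I would start from a sum-free Sidon set $M\subseteq\binvn{t}$ of maximum size $\abs{M}=\sfsmax{t}$. By \propref{prop:sum-free-sidon-extension-0}(a) the set $M\dotcup\set{0}$ is again Sidon (it is merely no longer sum-free, which is irrelevant here), and it has size $\sfsmax{t}+1$. Since $\smax{t}$ is the maximum size of any Sidon set in $\binvn{t}$, this already yields $\smax{t}\geq\sfsmax{t}+1$.

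For the reverse inequality $\smax{t}\leq\sfsmax{t}+1$, I would take a Sidon set $S\subseteq\binvn{t}$ of maximum size $\abs{S}=\smax{t}$; note $S$ is nonempty, so I may fix some $s_0\in S$. The translation $x\mapsto x+s_0$ is an affine permutation, so by \propref{prop:Sidon-equiv}(a) the set $S+s_0$ is Sidon, has the same size $\smax{t}$, and contains $s_0+s_0=0$. Now I apply \propref{prop:sum-free-sidon-extension-0}(b) to $S+s_0$: since it is Sidon and contains $0$, the set $(S+s_0)\setminzero$ is sum-free Sidon, of size $\smax{t}-1$. Hence $\sfsmax{t}\geq\smax{t}-1$, which is the desired inequality. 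Combining the two bounds gives $\smax{t}=\sfsmax{t}+1$.

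The one point deserving care — and the reason the argument is not completely symmetric — is that the sum-free property is \emph{not} invariant under translations but only under linear permutations (\propref{prop:sum-free-equiv}). Consequently one cannot simply translate a maximum Sidon set into a sum-free one directly; instead the reverse direction must be routed through the zero-containing normalisation, translating while only invoking the \emph{Sidon} invariance and converting to sum-free Sidon at the very end via the deletion of $0$. I expect this to be the only genuinely non-mechanical step, everything else being an immediate appeal to the cited propositions.
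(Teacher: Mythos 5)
Your proof is correct and is exactly the argument the paper intends: the paper omits an explicit proof, but Proposition \ref{prop:sum-free-sidon-extension-0} together with the affine invariance of the Sidon property from Proposition \ref{prop:Sidon-equiv} is precisely the machinery set up for this, and you apply it in both directions just as the surrounding text suggests. No gaps.
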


\section{Linear Codes}
\label{sec:linear-codes}

A (binary) \mathKeyword{linear code} $\code$ of \mathKeyword{length} $n$
and \mathKeyword{dimension} $k$
is a $k$ dimensional vector subspace $\code$ in $\binvn{n}$.
Such a code $\code$ is called an $[n,k]$-code and
$c\in \code$ is called a \mathKeyword{code word} of $\code$.
We consider all vectors to be row vectors.

If the \mathKeyword{minimum distance} of $\code$ is $d$,
that is the minimum number of non-zero entries
of all non-zero code words of $\code$, then $\code$ is called a $[n,k,d]$-code.

A \mathKeyword{parity check matrix} of an $[n,k]$-code $\code$
is an $(n-k)\times n$ matrix $\checkMat$
such that $\code$ equals to the kernel of $\checkMat$
i.e $\code = \sett{v\in\binvn{n}}{\checkMat\cdot \transpose{v} = 0}$.
We note that the rank of $\checkMat$ is  $n-k$.

The \mathKeyword{covering radius} of an $[n, k]$-code $\code$
with parity check matrix $\checkMat$
is the smallest integer $R$
such that every binary column vector with $n - k$ entries can be written as
the sum of at most $R$ columns of $\checkMat$.

We recall a fruitful one-to-one correspondence between additive structures and
linear codes, see \cite{CZ99}.
It translates additive properties of a subset $M$ of $\binvn{t}$
into properties of an associated code of length $\abs{M}$,
such as minimum distance or covering radius,
and vice versa.
Independently from us, \cite{nagy22} also discussed the one-to-one correspondence
with the focus set on Sidon sets.
A similar discussion is done in \cite{CCZ98} for the specific case of
graphs of APN functions, which are Sidon sets.

When formulating the correspondence we will see
that we need an ordering for the elements of $\binvn{t}$.
Therefore, for the rest of this section, we assume,
that $\binvn{n}$ is endowed with an ordering,
but all what follows is independent of this ordering.

Additionally, our purpose is to read information about a given set $M$ from its associated code.
However, this is not possible if the associated code is trivial,
that is, of dimension $0$ or $\abs{M}$.
So we formulate the correspondence in such a way
that we never obtain a trivial associated code from a given set $M$.

The \mathKeyword{one-to-one correspondence}
\begin{align*}
  \left\{ \begin{array}{c}
    \text{$M\subseteq\binvn{t}\setminzero$} \\
    \text{with $\abs{M} \geq t+1$}
  \end{array}\right\}
  &\longleftrightarrow
  \left\{ \begin{array}{c}
  \text{$[n,k,d]$-code $\code$}\\
  \text{with $n-1\geq k \geq 1$ and $d\geq 3$}
  \end{array}\right\}\\
  M \quad &\,\longmapsto \quad \code_M\\
  M_\code \quad &\,\,\reflectbox{$\longmapsto$} \qquad\code
\end{align*}
is defined as follows:

Let $M$ be a subset of $\binvn{t}\setminzero$ with $\abs{M}\geq t+1$.
We define the \mathKeyword{associated matrix} $\assoMat_M$ of $M$
as the $t\times \abs{M}$ matrix,
where the columns are the vectors of $M$, i.e
\[
\assoMat_M = (\transpose{m})_{m\in M}
\]
and the \mathKeyword{associated code} $\code_M$ of $M$
is the kernel of this matrix, i.e
\[
\code_M = \sett{v\in\binvn{\abs{M}}}{\assoMat_M\cdot \transpose{v} = 0}.
\]
If the rank of  $\assoMat_M$ is $t$,
then it is a parity check matrix of the associated code $\code_M$.

The dimension of $\code_M$ is never $0$ because of $\abs{M} \geq t+1$,
and never $\abs{M}$ because $0\notin M$.

Some basic properties of the associated codes are  the following:

{\it
Let $M$ be a subset of $\binvn{t}\setminzero$ with $\abs{M}\geq t+1$
and let $\code_M$ be its associated $[\abs{M},k,d]$-code.
Then
\begin{enumerate}
  \item $\abs{M} > t \geq 2$;
  \item $\abs{M}-1 \geq k \geq \abs{M}-t \geq 1$;
  \item $k=\abs{M}-t$ if and only if $\dim \vspan{M} = t$;
  \item $d\geq 3$, as no column is 0 and no column appears twice.
\end{enumerate}
}

The columns of a parity check matrix of an $[n,k,d]$-code $\code$
with $n-1\geq k \geq 1$ and $d\geq 3$
form a subset $M_\code$ of $\binvn{t}\setminzero$
with $t=n-k$ and $\abs{M_\code}=n \geq t+1 = n-k+1$,
which we call the \mathKeyword{associated set} of $\code$.

It should be noted that the associated set $M_\code$ of a code $\code$ is not unique,
just like the parity check matrix of a code is not unique.

As an example of the one-to-one correspondence we give the following proposition
(Proposition 2.1 of \cite{CZ99}),
with the proof appended for the convenience of the reader.

\begin{prop} \label{prop:sumfree-sidon-code}
  Let $M$ be a subset of $\binvn{t}\setminzero$ with $\abs{M}\geq t+1$
  and let $\code_M$ be its associated $[\abs{M},k,d]$-code.
  Then
  \begin{enumerate}
    \item $M$ is sum{-}free if and only if $d\geq 4$;
    \item $M$ is sum{-}free Sidon if and only if $d\geq 5$.
  \end{enumerate}
\end{prop}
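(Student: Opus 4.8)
The plan is to work entirely through the standard dictionary between codewords of $\code_M$ and subsets of $M$ summing to zero. First I would observe that a vector $v = (v_m)_{m\in M}\in\binvn{\abs{M}}$, with coordinates indexed by the elements of $M$ exactly as the columns of $\assoMat_M = (\transpose{m})_{m\in M}$ are, lies in $\code_M$ precisely when $\sum_{m\in M} v_m\, m = 0$ in $\binvn{t}$. Since the support of $v$ is $S = \sett{m}{v_m = 1}$ and its weight is $\abs{S}$, codewords of weight $w$ correspond bijectively to $w$-element subsets $S\subseteq M$ with $\sum_{m\in S} m = 0$. Hence the minimum distance $d$ equals the smallest cardinality of a non-empty subset of $M$ whose elements sum to zero. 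I would also record, as already noted among the basic properties of $\code_M$, that $d\geq 3$ always: no single column is $0$ because $0\notin M$, and no two distinct columns coincide because $M$ is a set, so no subset of size $1$ or $2$ sums to zero.

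For part (a) I would translate ``$d\geq 4$'' into ``no $3$-element subset of $M$ sums to zero''. A subset $\set{m_1,m_2,m_3}$ of size $3$ with $m_1+m_2+m_3=0$ is the same as $m_1+m_2=m_3$ with $m_1\neq m_2$, i.e.\ an element of $\twoStarSums{M}\cap M$; here the three vectors are automatically pairwise distinct, since $m_3=m_1$ would force $m_2=0\notin M$. Because $0\notin M$ gives $\twoSums{M}\cap M = \twoStarSums{M}\cap M$, a weight-$3$ codeword exists if and only if $\twoSums{M}\cap M\neq\emptyset$. By the basic property that $M$ is sum-free iff $\twoSums{M}\cap M=\emptyset$, this yields $d\geq 4$ iff $M$ is sum-free.

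For part (b), assuming $d\geq 4$ (equivalently, $M$ sum-free), I would argue that $d\geq 5$ is equivalent to the absence of weight-$4$ codewords, i.e.\ of $4$-element subsets of $M$ summing to zero. A subset $\set{m_1,m_2,m_3,m_4}$ with $m_1+m_2+m_3+m_4=0$ is the same as $m_1+m_2+m_3=m_4$ with $m_1,m_2,m_3$ pairwise distinct, i.e.\ an element of $\threeStarSums{M}\cap M$; again $m_4$ is forced to differ from $m_1,m_2,m_3$, since $m_4=m_1$ would give $m_2=m_3$. Hence a weight-$4$ codeword exists iff $\threeStarSums{M}\cap M\neq\emptyset$, which by the characterisation of the Sidon property ($M$ Sidon iff $\threeStarSums{M}\cap M=\emptyset$) is equivalent to $M$ not being Sidon. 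Combining, $d\geq 5$ holds iff $M$ is both sum-free and Sidon.

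The bookkeeping is otherwise routine; the one point I would be most careful to state explicitly — and the only place where anything could slip — is verifying in each case that the vectors involved are genuinely pairwise distinct, so that ``$k$-element subset summing to zero'' matches the distinctness conventions built into the definitions of sum-free and Sidon. This is exactly where the hypothesis $0\notin M$ does the work.
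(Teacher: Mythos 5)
Your proof is correct and follows essentially the same route as the paper: translating codewords of weight $3$ and $4$ into $3$- and $4$-element subsets of $M$ summing to zero and matching these against the characterisations of sum-free and Sidon sets. You merely spell out the distinctness bookkeeping (which the paper leaves implicit), so nothing further is needed.
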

\begin{proof}
  The minimum distance of an associated code is at least 3.
  \begin{enumerate}
    \item
      $M$ is sum{-}free if and only if the equation
      \[
         m_1 + m_2 + m_3 = 0
      \]
      has no solution for pairwise distinct $m_1,m_2,m_3\in M$.
      This is equivalent to: $\code_M$ has no code words of weight 3.
    \item
      $M$ is sum{-}free Sidon if and only if the system of equations
      \[
        \begin{cases}
          m_1 + m_2 + m_3 = 0\\
          m_1 + m_2 + m_3 + m_4 = 0\\
        \end{cases}
      \]
      has no solution for pairwise distinct $m_1,m_2,m_3,m_4\in M$.
      This is equivalent to:
      $\code_M$ has no code words of weight 3 or 4.
  \end{enumerate}
\end{proof}
Part (a) of \propref{prop:sumfree-sidon-code} is also included in \cite{CP92}.
For consequences in the APN setting, see \cite{CCZ98}.

The following Theorem gives details on the one-to-one correspondence with a focus
on Sidon sets.

\begin{thm} \label{thm:max-min-dist}
  Let $M$ be a subset of $\binvn{t}\setminzero$ with $\abs{M}\geq t+1$ and
  let $\code_M$ be its associated $[\abs{M},k,d]$-code.
  \begin{enumerate}
    \item If $M$ is sum{-}free and if $\abs{M}\geq\smax{t}$, then $d=4$ and $M$ is not Sidon.
    \item If $M$ is sum{-}free Sidon and if $\abs{M}\geq\smax{t-1}$, then $k=\abs{M}-t$ and $\assoMat_M$ is a parity check matrix of $\code_M$.
    \item If $M$ is sum{-}free Sidon and if $\abs{M}\geq\smax{t-1}+1$, then $d = 5$.
  \end{enumerate}
\end{thm}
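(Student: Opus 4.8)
The plan is to treat the three parts in turn, in each case translating the combinatorial hypotheses on $M$ into statements about the minimum distance $d$ via \propref{prop:sumfree-sidon-code}, and feeding in the size bounds through \propref{prop:smax-sfsmax} and \thmref{thm:large-sidon-set-properties}. For (a), I would first record that $M$ sum-free already gives $d\geq 4$ by \propref{prop:sumfree-sidon-code}(a). The crux is that $M$ cannot also be Sidon: by \propref{prop:smax-sfsmax} we have $\smax{t}=\sfsmax{t}+1$, so the hypothesis $\abs{M}\geq\smax{t}$ forces $\abs{M}>\sfsmax{t}$; since $\sfsmax{t}$ is by definition the largest size of a sum-free Sidon set, $M$ cannot be sum-free Sidon, and being sum-free it is therefore not Sidon. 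Thus $M$ is not sum-free Sidon, so \propref{prop:sumfree-sidon-code}(b) gives $d<5$, and together with $d\geq 4$ this yields $d=4$. The assertion that $M$ is not Sidon is exactly what was just established.

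For (b), both conclusions reduce to the single statement $\dim\vspan{M}=t$: the rank of $\assoMat_M$ equals $\dim\vspan{M}$ because its columns are the elements of $M$, so $\dim\vspan{M}=t$ makes $\assoMat_M$ a parity check matrix and forces $k=\abs{M}-t$ by the basic properties of associated codes. To obtain $\dim\vspan{M}=t$ I would pass to $M\dotcup\set{0}$, which is Sidon by \propref{prop:sum-free-sidon-extension-0}(a) and has size $\abs{M}+1\geq\smax{t-1}+1>\smax{t-1}$; applying \thmref{thm:large-sidon-set-properties}(b) gives $\dim\vspan{M\cup\set{0}}=t$, and since adjoining $0$ does not change the span, $\dim\vspan{M}=t$. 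The only delicate point is the boundary case $\abs{M}=\smax{t-1}$, where \thmref{thm:large-sidon-set-properties} cannot be applied to $M$ directly; adjoining $0$ is precisely the device that restores the strict inequality the theorem needs.

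For (c), the hypothesis $\abs{M}\geq\smax{t-1}+1$ yields the strict bound $\abs{M}>\smax{t-1}$, so \thmref{thm:large-sidon-set-properties}(a) applies directly to the Sidon set $M$ and gives $\fourSums{M}=\binvn{t}$. Since $M$ is sum-free Sidon we already have $d\geq 5$ by \propref{prop:sumfree-sidon-code}(b), so it suffices to exhibit a code word of weight $5$, i.e.\ five pairwise distinct elements of $M$ summing to $0$. Picking any $m\in M$, I would use $m\in\binvn{t}=\fourSums{M}=\fourStarSums{M}\cup\twoStarSums{M}\cup\set{0}$: the summand $\set{0}$ is excluded since $0\notin M$, and $m\in\twoStarSums{M}$ is excluded because $m=m_1+m_2$ with $m,m_1,m_2\in M$ would either violate sum-freeness or force $0\in M$. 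Hence $m\in\fourStarSums{M}$, say $m=m_1+m_2+m_3+m_4$ with the $m_i$ pairwise distinct.

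The last step, which is where care is needed, is to check $m\notin\set{m_1,m_2,m_3,m_4}$: an equality such as $m=m_1$ would give $m_2+m_3+m_4=0$, a sum-free violation. Thus $m,m_1,m_2,m_3,m_4$ are five distinct elements of $M$ summing to $0$, producing a weight-$5$ code word and hence $d=5$. Overall I expect the two genuine obstacles to be the boundary argument in (b) and this non-degeneracy check in (c); the minimum-distance bookkeeping in (a) is then essentially immediate.
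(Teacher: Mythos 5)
Your argument is correct in all three parts. For (a) and (b) you follow essentially the paper's own route: in (a) the paper deduces ``$M$ is not Sidon'' by noting that $M\dotcup\set{0}$ would be a Sidon set of size $>\smax{t}$, while you invoke $\smax{t}=\sfsmax{t}+1$ from \propref{prop:smax-sfsmax}; these are the same observation packaged differently. In (b) your device of adjoining $0$ to restore the strict inequality needed for \thmref{thm:large-sidon-set-properties}~(b) is exactly what the paper does. Part (c) is where you take a genuinely different path. The paper argues by contradiction: assuming $d\geq 6$, it punctures $\code_M$ to obtain an $[\abs{M}-1,\abs{M}-t,d']$-code with $d'\geq 5$, reads off from its parity check matrix a sum-free Sidon set $M'\subseteq\binvn{t-1}$ with $\abs{M'}\geq\smax{t-1}$, and contradicts part (a) one dimension down. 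You instead apply \thmref{thm:large-sidon-set-properties}~(a) directly to $M$ to get $\fourSums{M}=\binvn{t}$ and then \emph{exhibit} a weight-$5$ code word: any $m\in M$ lies in $\fourStarSums{M}$ (it is nonzero, and $m\in\twoStarSums{M}$ would violate sum-freeness), and your non-degeneracy check that $m$ differs from each of the four summands (else three elements of $M$ sum to $0$) is exactly the point that needs care and you handle it correctly. Your version is more constructive --- it produces an explicit weight-$5$ word rather than deriving a non-existence contradiction --- and it sidesteps the puncturing step and the dimension-reduction bookkeeping; the paper's version, in exchange, reuses part (a) as a black box and needs only the correspondence machinery. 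Both ultimately rest on the same quantitative input, namely that $\abs{M}>\smax{t-1}$ activates \thmref{thm:large-sidon-set-properties}.
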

\begin{proof}
  \begin{enumerate}
    \item Let $M\subseteq\binvn{t}\setminzero$ be sum{-}free.
          Then $d\geq 4$ from \propref{prop:sumfree-sidon-code} (a).
          If $\abs{M}\geq\smax{t}$ then $\abs{M\dotcup\set{0}} > \smax{t}$ and
          therefore  neither $M\dotcup\set{0}$ nor $M$ is Sidon,
          hence $d=4$ due to \propref{prop:sumfree-sidon-code} (b).

    \item If $M\subseteq\binvn{t}\setminzero$ is sum{-}free Sidon
          and if $\abs{M}\geq\smax{t-1}$ then
          $M\dotcup\set{0}$ is still Sidon and $\abs{M\dotcup\set{0}}>\smax{t-1}$.
          From \thmref{thm:large-sidon-set-properties} (b) follows
          $\dim\vspan{M}=t$ and therefore $k = \abs{M}-t$.

    \item From $M$ sum{-}free Sidon follows that $d\geq 5$
          and due to $\abs{M}\geq\smax{t-1}+1$ and (a), the matrix
 $\checkMat_M :=\assoMat_M$ is a parity check matrix of $\code_M$.

          Assume that $d\geq 6$.
          Then $\punct{\code_M}$,
          the puncturing of $\code_M$ (remove one column and one row of $\checkMat_M$),
          is an $[\abs{M}-1,\abs{M}-t,d']$-code with $d'\geq 5$.
          Therefore the columns of the check matrix
          $\checkMat_{\punct{\code_M}}$ of $\punct{\code_M}$
          form a sum{-}free Sidon set $M'\subseteq\binvn{t'}$
          with $\abs{M'} = \abs{M}-1$ and $t' = t-1$.
          From $\abs{M}\geq\smax{t-1}+1$ follows that $\abs{M'}= \abs{M}-1 \geq\smax{t-1}$
          and $M'\subseteq\binvn{t-1}$ not Sidon due to (a).
  \end{enumerate}
\end{proof}

Another interesting connection between a code property
 and a Sidon property is the following:
\begin{thm} \label{thm:sidon-covering-radius}
  Let $M$ be a subset of $\binvn{t}\setminzero$ with $\abs{M}\geq t+1$ and
  let $\code_M$ be its associated $[\abs{M},k,d]$-code with covering radius $R$.
  \begin{enumerate}
    \item If $M$ is sum{-}free Sidon and $\abs{M}\geq\smax{t-1}$, then
    $R=3$ or $R=4$.
    \item $M$ is maximal sum{-}free Sidon (that means we cannot
    extend it to a larger sum{-}free Sidon set by adding elements) if and only if $R=3$.
  \end{enumerate}
\end{thm}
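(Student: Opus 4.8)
The plan is to restate the covering radius in terms of star-sums and then read off both parts from the additive structure of $M$. Since the columns of $\checkMat_M=\assoMat_M$ are precisely the elements of $M$, the radius $R$ is the least $r$ for which every vector of $\binvn{t}$ is a sum of at most $r$ columns; over $\binf$ a repeated column cancels, so these columns may be taken distinct and
\[
R\leq r \quad\Longleftrightarrow\quad \binvn{t}=\set{0}\cup\bigcup_{j=1}^{r}\kStarSums{j}{M}.
\]
Note that \thmref{thm:max-min-dist}~(b) already ensures, under the hypothesis $\abs{M}\geq\smax{t-1}$, that $\assoMat_M$ is a genuine parity check matrix, so $R$ is well defined.

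For the upper bound $R\leq 4$ of part (a), I would apply \thmref{thm:large-sidon-set-properties} to $M\dotcup\set{0}$ rather than to $M$ itself. By \propref{prop:sum-free-sidon-extension-0}~(a) this set is Sidon, and $\abs{M\dotcup\set{0}}=\abs{M}+1>\smax{t-1}$, so the theorem gives $\fourSums{M\dotcup\set{0}}=\binvn{t}$. Because $0$ may now be used as a summand, a sum of four elements of $M\dotcup\set{0}$ is a sum of at most four elements of $M$, and the characteristic-$2$ identities then yield
\[
\fourSums{M\dotcup\set{0}}=\set{0}\cup M\cup\twoStarSums{M}\cup\threeStarSums{M}\cup\fourStarSums{M},
\]
so every vector of $\binvn{t}$ is a sum of at most four columns and $R\leq 4$.

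The delicate step is the lower bound $R\geq 3$, equivalently ruling out $R\leq 2$. Here $\set{0}$, $M$ and $\twoStarSums{M}$ are pairwise disjoint ($0\notin M$; and $M$ sum-free Sidon gives $0\notin\twoStarSums{M}$ and $M\cap\twoStarSums{M}=\emptyset$), so
\[
\abs{\set{0}\cup M\cup\twoStarSums{M}}=1+\abs{M}+\binom{\abs{M}}{2}=1+\binom{\abs{M}+1}{2}.
\]
Applying the trivial Sidon bound to $M\dotcup\set{0}$ gives $\binom{\abs{M}+1}{2}\leq 2^{t}-1$, whence this union has at most $2^{t}$ elements; thus $R\leq 2$ would force $\binom{\abs{M}+1}{2}=2^{t}-1$, i.e. equality in the bound of \propref{prop:max-sidon-upper-bound}. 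This is the main obstacle I expect: it is exactly the condition that $\code_M$ be a perfect two-error-correcting code, forcing $\abs{M}(\abs{M}+1)=2^{t+1}-2$. I would dispose of it through the Diophantine analysis underlying \propref{prop:max-sidon-upper-bound} --- the solutions of that equation stem from a Ramanujan--Nagell type equation --- together with the nonexistence of the corresponding perfect codes, which rules out $R\leq 2$ and leaves $R\in\set{3,4}$.

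For part (b) I would argue directly from the extension criterion. By \propref{prop:sum-free-sidon-extension}~(b) the admissible extension elements are exactly those in $\binvn{t}\setminus(\threeSums{M}\cup\twoSums{M})$, and since $M\subseteq\threeSums{M}$ each such element automatically lies outside $M$; hence $M$ is maximal sum-free Sidon if and only if $\threeSums{M}\cup\twoSums{M}=\binvn{t}$. The characteristic-$2$ identities give
\[
\threeSums{M}\cup\twoSums{M}=\set{0}\cup M\cup\twoStarSums{M}\cup\threeStarSums{M},
\]
so this condition says precisely that every vector of $\binvn{t}$ is a sum of at most three columns, i.e. $R\leq 3$. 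Combining this unconditional equivalence ``maximal $\iff R\leq 3$'' with the lower bound $R\geq 3$ from part (a) then yields ``maximal sum-free Sidon $\iff R=3$'', as claimed.
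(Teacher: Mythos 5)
Your overall architecture is sound and close to the paper's: you get $R\leq 4$ from \thmref{thm:large-sidon-set-properties}, you translate ``sum of at most $r$ columns'' into unions of star-sums, and you obtain the equivalence ``maximal $\iff R\leq 3$'' from the extension criterion (the paper gets the same equivalence from \propref{prop:max-sidon-charac}). Your application of the four-sums theorem to $M\dotcup\set{0}$ rather than to $M$ is in fact the cleaner way to meet that theorem's strict hypothesis $\abs{M\dotcup\set{0}}>\smax{t-1}$.

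The genuine gap is exactly the step you flag as delicate, $R\geq 3$. You correctly reduce it to excluding the boundary case $1+\binom{\abs{M}+1}{2}=2^{t}$, i.e.\ to the nonexistence of a perfect two-error-correcting $\code_M$, but you leave that step unexecuted, and it cannot be executed as you propose: the Ramanujan--Nagell analysis of $(2\abs{M}+1)^2=2^{t+3}-7$ leaves $t=4,\ \abs{M}=5$ and $t=12,\ \abs{M}=90$. The latter is vacuous by \thmref{thm:improved-codes-non-existence-sidon-bound} (there is no Sidon set of size $91$ in $\binvn{12}$), but at $t=4$ the set $M=\set{e_1,e_2,e_3,e_4,e_1+e_2+e_3+e_4}$ is a maximal sum-free Sidon set with $\abs{M}=5=t+1\geq\smax{3}=4$, its associated code is the perfect $[5,1,5]$ repetition code, and one checks directly that $\set{0}\cup M\cup\twoStarSums{M}=\binvn{4}$, so $R=2$. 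Thus the boundary case is realised, your proposed repair via nonexistence of perfect codes fails, and indeed the statement as printed fails at $t=4$ in both parts. For what it is worth, the paper's own proof has the same hole --- it infers $R\geq 3$ from $\abs{\twoStarSums{M}}<2^{t}$ alone, forgetting the $1+\abs{M}$ vectors covered by zero or one column --- so you have located a real defect in the argument; but closing it requires either adding a hypothesis such as $t\geq 5$ or weakening (b) to ``$R\leq 3$'', not the perfect-code classification.
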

\begin{proof}
  \begin{enumerate}
    \item
      $M$ is Sidon and therefore $\abs{\twoStarSums{M}} = \binom{\abs{M}}{2}$.
      From $\abs{M}\geq t+1$ follows that $\abs{M}>t\geq 2$.
      But then $\abs{\twoStarSums{M}} = \binom{\abs{M}}{2} < 2^t$ and $R\geq 3$.
      From \thmref{thm:large-sidon-set-properties} (a) follows $R \leq 4$.
    \item
      This is a direct consequence of \propref{prop:max-sidon-charac}.
  \end{enumerate}
\end{proof}

We close this section by discussing the best possible minimum distance of
a code with given length $n$ and dimension $k$.
Therefore we define
\[
\dmax{n,k} = \max\sett{d}{\text{there exists an $[n,k,d]$-code}}.
\]
as the \mathKeyword{maximal minimum distance} of a code with given
length $n$ and dimension $k$.
It is one of the main properties of \mathKeyword{optimal} codes
and frequently listed as a matrix $\dmaxTab$, for example in
Grassl's codes table \cite{Grassl:codetables} (\href{http://codetables.de}{http://codetables.de})
or the codes table of the MinT project from Schürer and Schmid \cite{schurer2006mint} (\url{http://mint.sbg.ac.at/}).
Now we translate \thmref{thm:max-min-dist} to some properties
of the subdiagonals of $\dmaxTab$, namely the entries $(\dmax{n,n-t})_n$
for a fixed $t$.

\begin{prop} \label{prop:max-min-charac}
  Let be $n,t\in\nat$ with $n > t \geq 2$. Then
  \begin{enumerate}
    \item $\dmax{n, n-t}=3$ if and only if $2^{t-1} < n < 2^{t}$;
    \item $\dmax{n, n-t}=4$ if and only if $\smax{t} \leq n \leq 2^{t-1}$;
    \item $\dmax{n, n-t}=5$ if and only if $\smax{t-1} < n < \smax{t}$;
    \item $\dmax{n, n-t}\geq 6$ if and only if $n \leq \smax{t-1}$.
  \end{enumerate}
\end{prop}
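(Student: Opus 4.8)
The plan is to reduce the four equivalences to \emph{threshold statements} describing exactly when $\dmax{n,n-t}$ is at least $3,4,5$ and $6$, and then obtain each case by differencing consecutive thresholds. Writing $M$ for the column set of a parity check matrix of a candidate $[n,n-t,d]$-code, the correspondence gives $M\subseteq\binvn{t}\setminzero$ with $\abs{M}=n$ and redundancy $t$, and \propref{prop:sumfree-sidon-code} translates the distance conditions: $d\geq 3$ says the columns are distinct and non-zero, $d\geq 4$ says $M$ is sum-free, and $d\geq 5$ says $M$ is sum-free Sidon. In each threshold the forward direction (necessity) is immediate from these translations together with the size bounds $\abs{M}\leq 2^t-1$ (distinct non-zero), $\abs{M}\leq 2^{t-1}$ (sum-free) and $\abs{M}\leq\sfsmax{t}=\smax{t}-1$ (sum-free Sidon, using \propref{prop:smax-sfsmax}); the work lies in the constructions, where I must also ensure that $M$ spans $\binvn{t}$ so that the code has dimension exactly $n-t$.

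First I would settle the two elementary thresholds. For $\dmax{n,n-t}\geq 3$ I take any $n$ distinct non-zero vectors that include a basis (possible since $t<n<2^t$), obtaining a spanning set with $d\geq 3$; conversely distinctness forces $n\leq 2^t-1$. For $\dmax{n,n-t}\geq 4$ I use the sum-free coset $\sett{x}{x_t=1}$ of size $2^{t-1}$: the vectors $e_t,e_t+e_1,\dots,e_t+e_{t-1}$ already span $\binvn{t}$, so any size-$n$ subset of the coset containing them (which exists as $t< n\leq 2^{t-1}$) is a spanning sum-free set and yields $d\geq 4$. This gives $\dmax{n,n-t}\geq 3 \iff n<2^t$ and $\dmax{n,n-t}\geq 4 \iff n\leq 2^{t-1}$, which already proves cases (a) and (b) by differencing.

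For $\dmax{n,n-t}\geq 5$ the necessity is the sum-free Sidon size bound. For sufficiency, when $n\leq\sfsmax{t}$ I pick an $n$-element subset of a maximum sum-free Sidon set (still sum-free Sidon); if moreover $n>\smax{t-1}$, then \thmref{thm:large-sidon-set-properties}(b) forces $\dim\vspan{M}=t$, so the associated code has dimension $n-t$ and $d\geq 5$. Combined with the $\geq 6$ threshold below, this pins $\dmax{n,n-t}=5$ exactly on $\smax{t-1}<n<\smax{t}$, which is case (c); the subtlety that a small sum-free Sidon set need not span is avoided here precisely because this range forces $n>\smax{t-1}$.

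The crux is the $\geq 6$ threshold, case (d), which I would handle by a length/redundancy recursion proved by induction on $t$. Puncturing an $[n,n-t,\geq 6]$-code exactly as in the proof of \thmref{thm:max-min-dist}(c) yields an $[n-1,(n-1)-(t-1),\geq 5]$-code, and conversely adjoining an overall parity check to such a code produces an $[n,n-t,\geq 6]$-code while preserving the dimension; hence $\dmax{n,n-t}\geq 6 \iff \dmax{n-1,(n-1)-(t-1)}\geq 5$. Applying the already-established $\geq 5$ threshold in dimension $t-1$ rewrites the right-hand side as $n-1<\smax{t-1}$, i.e.\ $n\leq\smax{t-1}$, giving case (d); the same recursion also supplies the missing small-$n$ construction for case (c) (when $n\leq\smax{t-1}$ a $\geq 6$ code is a fortiori a $\geq 5$ code). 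The necessity half can instead be read off directly from \thmref{thm:max-min-dist}(c): a sum-free Sidon column set of size $n\geq\smax{t-1}+1$ forces $d=5$, ruling out $d\geq 6$. I expect the main obstacle to be bookkeeping the boundary and small-dimension cases of this induction (e.g.\ $t=2$, where several ranges are empty because $\smax{1}=2$) and verifying that puncturing and extension keep the dimension equal to $n-t$ rather than increasing it.
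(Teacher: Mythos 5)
Your proof is correct, and for parts (a)--(c) it follows essentially the same route as the paper: the one-to-one correspondence, the hyperplane-coset construction for sum-free sets, subsets of maximum sum-free Sidon sets, and \thmref{thm:max-min-dist} to pin the minimum distance down to exactly $4$ or $5$. The genuine difference lies in your organisation into monotone thresholds for $\dmax{n,n-t}\geq 3,4,5,6$ and, above all, in your treatment of part (d). The paper disposes of (d) with ``this follows from (a), (b) and (c)'', but that only yields the forward implication (if $\dmax{n,n-t}\geq 6$ then $n$ lies outside the three ranges already accounted for); the converse requires actually exhibiting an $[n,n-t,\geq 6]$-code for every $t<n\leq\smax{t-1}$, and this is not a formal consequence of (a)--(c). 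Your puncture/extend recursion $\dmax{n,n-t}\geq 6 \Leftrightarrow \dmax{n-1,(n-1)-(t-1)}\geq 5$, run as an induction on $t$, supplies exactly this missing construction, and as a by-product it also resolves the spanning issue for sum-free Sidon sets of size $n\leq\smax{t-1}$ (which \thmref{thm:large-sidon-set-properties} cannot reach), so that the associated code really has dimension $n-t$. The only blemish is a wording slip: the ``missing small-$n$ construction'' completes the $\geq 5$ \emph{threshold} (needed when differencing to get case (b)), not case (c) itself, whose range excludes $n\leq\smax{t-1}$. In short, your write-up is more complete than the paper's own proof on point (d), at the cost of carrying an induction and its (vacuous) base case $t=2$.
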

\begin{proof}
  From our correspondence it follows that
  every $M\subseteq\binvn{t}\setminzero$ with $\abs{M}\geq t+1$
  gives rise to an associated $[\abs{M},k,d]$-code with $d\geq 3$.
  \begin{enumerate}
    \item
      If $\abs{M}>2^{t-1}$, then $\dim\vspan{M}=t$ and $k=\abs{M}-d$,
      but $M$ cannot be sum{-}free anymore.
      Thus $d=3$ and (a) is shown.

    \item
      Let  $M=H+a$ with a hyperplane $H$ of $\binvn{t}$
      (which is a linear subspace of dimension $t-1$) and $a\in\binvn{t}\setminus H$.
      Hence $M$ is sum-free and $d=4$ from \thmref{thm:max-min-dist} (a).
      Additionally, $\dim\vspan{M}=t$ and $k=\abs{M}-d$.
      Now, removing elements from $M$ such that $\dim\vspan{M}=t$ is still valid,
      leads, together with \thmref{thm:max-min-dist} (a), to (b).

    \item
      This is a direct consequence of \thmref{thm:max-min-dist} (b) and (c).

    \item
      This follows from (a), (b) and (c).
  \end{enumerate}

\end{proof}

\section{Non-existence results}
\label{sec:non-existence-results}

Due to the importance of non-existence statements for Sidon sets and as well for linear codes we reformulate \propref{prop:max-min-charac}.

\begin{cor} \label{cor:non-exist-equi}
  Let be $n,t\in\nat$ with $n > t \geq 2$. Then the following statements are equivalent:
  \begin{enumerate}
    \item There is no $[n,n-t,5]$ code.
    \item There is no Sidon set $M\subseteq \binvn{t}$ of size $n+1$.
    \item $\smax{t} \leq n$.
  \end{enumerate}
\end{cor}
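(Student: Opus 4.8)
The plan is to deduce the three-way equivalence from two independent arguments: the elementary equivalence (b)$\iff$(c), which needs only the definition of $\smax{t}$, and the equivalence (a)$\iff$(c), which is a direct repackaging of \propref{prop:max-min-charac}. Since both arguments tie statement (c) to the others, together they give the full equivalence.

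For (b)$\iff$(c) I would argue purely combinatorially. If $\smax{t}\leq n$, then every Sidon set in $\binvn{t}$ has size at most $\smax{t}\leq n<n+1$, so none has size $n+1$; this is (c)$\Rightarrow$(b). Conversely I would prove the contrapositive of (b)$\Rightarrow$(c): if $\smax{t}\geq n+1$, take a Sidon set of size $\smax{t}$ and discard elements until it has size $n+1$; since every subset of a Sidon set is Sidon, this produces a Sidon set of size $n+1$, contradicting (b). Hence $\smax{t}\leq n$. The only ingredients are the hereditary nature of the Sidon property and the definition of $\smax{t}$.

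The content lies in (a)$\iff$(c), for which I would invoke \propref{prop:max-min-charac} directly. Under the convention that existence of an $[n,n-t,5]$-code means a code of length $n$ and dimension $n-t$ whose minimum distance is at least $5$, statement (a) reads $\dmax{n,n-t}\leq 4$. Taking the union of cases (c) and (d) of \propref{prop:max-min-charac}, one sees that $\dmax{n,n-t}\geq 5$ holds exactly when $n\leq\smax{t-1}$ or $\smax{t-1}<n<\smax{t}$, i.e.\ exactly when $n<\smax{t}$. Negating both sides yields $\dmax{n,n-t}\leq 4\iff\smax{t}\leq n$, which is precisely (a)$\iff$(c).

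I expect the only delicate points to be bookkeeping rather than mathematics. First, the convention for the code-existence clause must be fixed in advance: with the ``minimum distance exactly $5$'' reading the equivalence breaks, since for $n\leq\smax{t-1}$ one has $\dmax{n,n-t}\geq 6$ (hence no code of distance exactly $5$) while $\smax{t}\leq n$ is false. Second, the four cases of \propref{prop:max-min-charac} exhaust only the range $n<2^t$, so I would dispose of $n\geq 2^t$ separately: there no $t\times n$ binary matrix has $n$ distinct nonzero columns, forcing $\dmax{n,n-t}\leq 2<5$, while $\smax{t}<2^t\leq n$ secures (c); thus (a) and (c) both hold and the equivalence persists. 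With these two remarks the argument is complete.
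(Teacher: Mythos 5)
Your argument is correct and is essentially the paper's: the corollary is stated there without proof as a direct reformulation of \propref{prop:max-min-charac}, which is exactly your (a)$\iff$(c) step, combined with the trivial (b)$\iff$(c); your extra care about the range $n\geq 2^t$ and the reading of ``$[n,n-t,5]$ code'' is sensible bookkeeping. One peripheral quibble: your claim that the ``minimum distance exactly $5$'' convention would break the equivalence is not right, since puncturing an $[n,k,6]$ code at a coordinate supporting a minimum-weight word and then appending a zero coordinate yields an $[n,k]$ code of minimum distance exactly $5$, so for existence purposes the two readings of statement (a) coincide.
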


The following result from Brouwer and Tolhuizen \cite{BT93}
is achieved by a sharpening of the Johnson bound.
It improves the trivial upper bound \equRefTrivialBound{} for odd dimension.
\begin{thm} [\cite{BT93}] \label{thm:BT93}
  There is no $\left[n, n - t, 5\right]$ code for $n = 2^{(t+1)/2} - 2$,
  hence
  \[
  \smax{t} \leq 2^{(t+1)/2} - 2
  \]
  for $t$ odd with $t \geq 7$.
\end{thm}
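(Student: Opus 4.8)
The plan is to argue on the code side and translate at the very end. By \corref{cor:non-exist-equi}, proving that no $[n, n-t, 5]$ code exists at $n = 2^{(t+1)/2} - 2$ is the same as proving $\smax{t} \leq 2^{(t+1)/2} - 2$. Writing $s = (t+1)/2$, so that $t = 2s-1$, $n = 2^s - 2$ and $2^t = 2^{2s-1}$, the trivial bound of \propref{prop:max-sidon-upper-bound} only gives $\smax{t} \leq 2^s$ for odd $t$, so the task is to shave off two. A quick check shows that the ordinary sphere-packing (Hamming) bound for $d=5$, namely $1 + n + \binom{n}{2} \leq 2^t$, is comfortably satisfied at this length, so it gives nothing; a genuine \emph{sharpening} of the counting is required. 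I would therefore assume for contradiction that a binary $[n, n-t, 5]$ code $\code$ with parity check matrix $\checkMat$ exists at $n = 2^s - 2$, and count the $2^t$ cosets (syndromes) of $\code$ according to the minimum weight occurring in each.

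First I would record the exact count of low-weight cosets. Since $d = 5 > 4$, no two distinct vectors of weight at most $2$ can lie in one coset, so the vectors of weight $0,1,2$ occupy exactly $1 + n + \binom{n}{2}$ distinct cosets, leaving $2^t - 1 - n - \binom{n}{2}$ cosets whose coset leader has weight at least $3$. Next I would analyse the $\binom{n}{3}$ vectors of weight $3$. A weight-$3$ vector $x$ shares its coset with a weight-$2$ vector exactly when $x$ is supported inside a weight-$5$ codeword $c$ (the complementary two coordinates of $c$ then give the weight-$2$ leader); a short argument using $d=5$ shows that distinct weight-$5$ codewords meet in at most two coordinates, so each weight-$3$ vector sits inside at most one weight-$5$ codeword. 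Hence precisely $10 A_5$ of the weight-$3$ vectors are accounted for this way, where $A_5$ is the number of weight-$5$ codewords, and the remaining $\binom{n}{3} - 10 A_5$ weight-$3$ vectors must lie in the weight-$3$-leader cosets.

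The sharpening then comes from controlling how these leftover weight-$3$ vectors distribute among those cosets. Two distinct weight-$3$ vectors in one coset differ by a codeword of weight $6 - 2\abs{x\cap x'}$; by parity and $d=5$ this weight is exactly $6$, so the two weight-$3$ vectors have disjoint supports, and their sum is a weight-$6$ codeword. Thus the weight-$3$ vectors inside any single coset are pairwise support-disjoint, and one bounds their number $\mu$ per coset together with an upper bound on $A_5$ coming from the two-coordinate intersection constraint. Combining these with $\binom{n}{3} - 10 A_5 \leq \mu\,(2^t - 1 - n - \binom{n}{2})$ yields a numerical inequality in $n$ and $t$; substituting $n = 2^s - 2$ and $2^t = 2^{2s-1}$ and simplifying should violate it for odd $t \geq 7$, producing the contradiction and hence the theorem.

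The main obstacle is exactly this last step. The naive per-coset bound $\mu \leq \floor{n/3}$ and the trivial estimate on $A_5$ are far too weak, and as noted the Hamming bound already gives nothing here. What is needed is the Johnson-type refinement: a tight bound on the packing of pairwise support-disjoint weight-$3$ vectors sharing a single syndrome, paired with a matching estimate on the number and intersection pattern of the weight-$5$ and weight-$6$ codewords, so that the two contributions together overfill the $2^t - 1 - n - \binom{n}{2}$ available high-weight cosets at precisely $n = 2^s - 2$. Making these two combinatorial bounds simultaneously sharp is the delicate heart of the argument, and it is where the hypothesis $t \geq 7$ (equivalently $s \geq 4$) is used to guarantee that the final inequality is strictly violated.
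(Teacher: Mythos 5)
You have identified the right route: the theorem is not proved in the paper itself but quoted from Brouwer and Tolhuizen \cite{BT93}, and the paper recalls exactly the two ingredients of their argument at the start of the proof of \thmref{thm:improved-codes-non-existence-sidon-bound} --- the Johnson bound \equRefContradict{}, $\abs{C}\leq 2^n/s$ with $s = 1+n+\binom{n}{2}+\frac{1}{\floor{n/3}}\bigl(\binom{n}{3}-10\cdot a_5\bigr)$, and the sharpened estimate \equRefEstimate{} of $10\cdot a_5$ for linear codes. Your coset-counting set-up is precisely the combinatorial skeleton behind the Johnson bound, and your individual observations (vectors of weight at most $2$ lie in distinct cosets; a weight-$3$ vector shares a coset with a weight-$2$ vector exactly when its support lies inside a weight-$5$ codeword; two weight-$5$ codewords meet in at most two coordinates; two weight-$3$ vectors in one coset have disjoint supports) are all correct.

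The genuine gap is that the proposal stops exactly where the proof has to start. You concede that the estimates you have are too weak and then announce that ``a Johnson-type refinement'' is needed, without stating or proving it --- but that refinement is the entire content of the theorem. To be concrete: the per-coset bound $\mu\leq\floor{n/3}$ is exactly what the Johnson bound already uses, so it is not the weak point; the weak point is the estimate $10\cdot A_5\leq\floor{\frac{n-2}{3}}\cdot\binom{n}{2}$. Since $\binom{n}{3}=\frac{n-2}{3}\binom{n}{2}$, this trivial estimate makes the correction term in $s$ at most about $\frac{b}{2}(n-1)$ with $b=(n-2)\bmod 3$, whereas at $n=2^{(t+1)/2}-2$ one needs an excess of roughly $\frac{3n}{2}$ over the Hamming count $1+n+\binom{n}{2}$ to force $s>2^t$; so no contradiction arises. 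Brouwer and Tolhuizen's sharpening, reproduced as \equRefEstimate{} in the paper, exploits linearity (a parity argument on the weight-$5$ codewords through a fixed coordinate pair, whose supports off that pair are pairwise disjoint $3$-sets) to reduce the coefficient of $\binom{n}{2}$ from $\floor{\frac{n-2}{3}}$ by essentially $1$ in the relevant congruence classes, which is exactly the gain of order $\frac{1}{\floor{n/3}}\binom{n}{2}\approx\frac{3}{2}(n-1)$ that closes the argument for odd $t\geq 7$. Until that bound is stated, proved, and substituted at $n=2^{(t+1)/2}-2$, what you have is an outline of the known strategy rather than a proof.
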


With arguments similar to those used by Brouwer and Tolhuizen,
we are able to generalise this result to arbitrary $t\geq 6$
and thereby further improve the trivial upper bound \equRefTrivialBound{}.
This improves also a recent bound given by Tait and Won (Theorem 5.1 of \cite{TaitWon21}).

\begin{thm} \label{thm:improved-codes-non-existence-sidon-bound}
  Let  $t \geq 6$, and write  $\floor{\sqrt{2^{t+1}} + 0.5} - 4 = 3a+b$ with
  $a\in\inte_{\geq 0}$, $b\in\set{0,1,2}$,
  $\eps = {\sqrt{2^{t+1}} + 0.5} - \floor{\sqrt{2^{t+1}} + 0.5} \in[0,1)$
  and
  \[
    \lambda_{a,b,\eps} =
      \begin{cases}
        1   &\text{ for $a$ odd and $b=0$},\\
        2   &\text{ for $a$ odd, $b=1$ and
                                $0\leq\eps \leq 1-\frac{1}{2^{(t-4)/2}}$},\\
        1   &\text{ for $a$ odd, $b=1$ and
                                $1-\frac{1}{2^{(t-4)/2}} < \eps<1$},\\
        2   &\text{ for $a$ odd and $b=2$},\\
        2   &\text{ for $a$ even, $b=0$  and $0\leq\eps \leq 0.5$},\\
        1   &\text{ for $a$ even, $b=0$ and $0.5<\eps<1$},\\
        2   &\text{ for $a$ even, $b=1$ and
                                  $0\leq\eps \leq 1-\frac{1}{2^{(t-5)/2}}$},\\
        1   &\text{ for $a$ even, $b=1$ and
                                  $1-\frac{1}{2^{(t-5)/2}} < \eps \leq 1-\frac{1}{2^{(t+7)/2}}$},\\
        0   &\text{ for $a$ even, $b=1$ and
                                  $1-\frac{1}{2^{(t+7)/2}} < \eps<1$},\\
        0   &\text{ for $a$ even and $b=2$}.
      \end{cases}
  \]

  Then there is no $\left[n_t, n_t - t, 5\right]$ code for
  $n_t = \floor{\sqrt{2^{t+1}} + 0.5} - \lambda_{a,b,\eps}$
  and therefore
  \begin{equation} 
    \smax{t} \leq
      \begin{cases}
        2^{\frac{t+1}{2}} - 2           &\text{ for $t$ odd},\\
        \floor{\sqrt{2^{t+1}} + 0.5} - \lambda_{a,b,\eps} &\text{ for $t$ even}.
      \end{cases}
  \end{equation}
\end{thm}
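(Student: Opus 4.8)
The plan is to follow the Brouwer–Tolhuizen strategy for $t$ odd (\thmref{thm:BT93}) and push it through to even $t$, exploiting the sharpened Johnson bound that relates the existence of an $[n, n-t, 5]$ code to a counting inequality on the columns of a parity check matrix. By \corref{cor:non-exist-equi} it suffices to prove the non-existence of an $[n_t, n_t - t, 5]$ code; the bound on $\smax{t}$ then follows immediately, and for $t$ odd the claimed bound $2^{(t+1)/2} - 2$ is exactly \thmref{thm:BT93}, so the real work is entirely in the even case. First I would recall that a $[n, n-t, 5]$ code is equivalent (via the associated matrix) to a sum-free Sidon set $M \subseteq \binvn{t}\setminzero$ of size $n$ whose $2$-star-sums are all distinct; the Johnson-type argument counts, for a fixed $x\in\binvn{t}$, how many pairs $\{m,m'\}\subseteq M$ satisfy $m+m'=x$, and derives a contradiction from a double-counting identity once $n$ is large enough relative to $2^{t}$.

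\textbf{The core counting step.}
The heart of the matter is a refined version of the pigeonhole bound $\binom{n}{2}\le 2^{t}-1$. Writing $n_t = \floor{\sqrt{2^{t+1}}+0.5}$, the quantity $\binom{n_t}{2}$ sits just below $2^{t}$, and the exact deficit is governed by the fractional parameter $\eps$. I would make the substitution $n_t - 4 = 3a + b$ explicit and track, through a partition of $\binvn{t}$ into cosets of a suitable hyperplane, a parity or divisibility invariant of the number of representations $x = m+m'$ — this is the analogue of the ``$k(k-1)$ even'' contradiction used in the proof of \propref{prop:max-sidon-upper-bound}, now applied after the sharpening. The case split in $\lambda_{a,b,\eps}$ is exactly the bookkeeping of how many columns must be deleted so that the residual counting inequality becomes strictly violated: each branch pins down the largest $\lambda\in\set{0,1,2}$ for which $n_t - \lambda$ columns still force a weight-$\le 4$ codeword, i.e. a collision $m_1+m_2+m_3 = 0$ or $m_1+m_2+m_3+m_4=0$.

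\textbf{Why the case analysis is unavoidable.}
The thresholds on $\eps$ (for instance $1-\tfrac{1}{2^{(t-4)/2}}$ and $1-\tfrac{1}{2^{(t+7)/2}}$) are precisely the points at which the rounding in $\floor{\sqrt{2^{t+1}}+0.5}$ changes the sign of the sharpened Johnson residual, and the parity of $a$ determines whether the invariant tracked in the counting step is an even or odd obstruction. So I would organise the proof as a lemma establishing the sharpened inequality
\[
  \binom{n}{2} + \delta(n,t) \;\le\; 2^{t} - 1,
\]
where $\delta$ is the Johnson correction term, followed by a verification that substituting $n = n_t - \lambda_{a,b,\eps}$ makes this fail in every branch, while $n = n_t - \lambda_{a,b,\eps} + 1$ does not — thereby certifying $\lambda_{a,b,\eps}$ is optimal for this method.

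\textbf{The main obstacle.}
The hard part will be controlling the Johnson correction term $\delta(n,t)$ with enough precision to distinguish between $\lambda = 1$ and $\lambda = 2$ near the $\eps$-thresholds: the sharpening of Brouwer and Tolhuizen is tight only up to lower-order terms, and for even $t$ the relevant $2^{(t-4)/2}$ and $2^{(t-5)/2}$ scales force one to keep track of whether $\sqrt{2^{t+1}}$ is nearly a half-integer. I expect the even subcase ``$a$ even, $b=1$'' to be the delicate one, since it is the only branch where all three values $\lambda\in\set{0,1,2}$ occur; I would handle it last, after the cleaner odd-$a$ branches have fixed the shape of the argument.
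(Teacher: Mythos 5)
Your high-level plan matches the paper's: reduce via \corref{cor:non-exist-equi} to the non-existence of an $[n_t,n_t-t,5]$ code, invoke the Johnson bound together with the Brouwer--Tolhuizen sharpening, and organise a case analysis on the parity of $a$ and the value of $b$ with $\eps$-thresholds coming from the rounding of $\sqrt{2^{t+1}}$. But the proposal stops at the plan, and in the one place where you describe the actual mechanism you describe it incorrectly. The contradiction is not a parity or divisibility obstruction obtained from a coset partition of $\binvn{t}$, and it is not an analogue of the ``$k(k-1)$ even'' trick from \propref{prop:max-sidon-upper-bound}. It is purely numerical: the Johnson bound gives $\abs{C}\leq 2^n/s$ with $s = 1 + n + \binom{n}{2} + \frac{1}{\floor{n/3}}\bigl(\binom{n}{3}-10 a_5\bigr)$, so a linear $[n,n-t,5]$ code forces $s\leq 2^t$; the Brouwer--Tolhuizen estimate bounds $10 a_5$ from above by one of three explicit expressions depending on the parity of $a$ and on $b$ (where $n-2=3a+b$); in each branch this yields a lower bound of the form $2s\geq (n+c)^2-c'$ for explicit constants, and substituting $n=\floor{\sqrt{2^{t+1}}+0.5}-\lambda$ and comparing with $2^{t+1}$ is what produces both the values of $\lambda_{a,b,\eps}$ and the exact thresholds $1-\frac{1}{2^{(t-4)/2}}$, $1-\frac{1}{2^{(t-5)/2}}$, $1-\frac{1}{2^{(t+7)/2}}$ and $0.5$. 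Without writing down those three estimates and carrying out the substitutions, the case table in the statement cannot be recovered, so the entire quantitative content of the theorem is missing from your argument.

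A second point you would have to confront: when the inequality fails to give a contradiction at $n=\floor{\sqrt{2^{t+1}}+0.5}-2$, the paper does not abandon that branch; it increases $n$ by one, which changes the residue of $n-2$ modulo $3$ and hence moves the code into a \emph{different} branch of the Brouwer--Tolhuizen estimate, where the sharpening may suffice after all (non-existence at the larger length being a weaker but still useful conclusion). This cascade --- e.g.\ ``$a$ even, $b=2$'' falling through to ``$a$ odd, $b=0$'' and then to ``$a$ odd, $b=1$'' --- is precisely what generates the entries $\lambda_{a,b,\eps}\in\set{0,1}$ and the three-way split in the case ``$a$ even, $b=1$''. Your framing (fix $\lambda$, check that the inequality is violated at $n_t-\lambda$ but not at $n_t-\lambda+1$) does not capture this iteration, and the optimality certification you propose is neither needed for the theorem nor established by the method. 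So while the skeleton is right, there is a genuine gap: all of the estimates that constitute the theorem remain to be done, and the substitute mechanism you sketch would not produce them.
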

\begin{proof}
  Because of \corref{cor:non-exist-equi} it is sufficient to show
  the non-existence of an $\left[n_t, n_t - t, 5\right]$ code.

  Let us recall some arguments from the proof of
  \thmref{thm:BT93} in \cite{BT93}.
  Let $C$ be an arbitrary (linear or non-linear) code
  of length $n$, with minimum distance $5$, and
  where, on the average, each codeword is at distance 5
  from $a_5$ other codewords.
  The Johnson upper bound (Theorem 1 of \cite{Johnson62}) states
  \begin{equation} 
    \abs{C} \leq \frac{2^n}{s}
  \end{equation}
  for
  \begin{equation*}
    s = 1 + n + \frac{n(n-1)}{2} + \frac{1}{\floor{n/3}}\Bigl(\binom{n}{3}-10\cdot a_5\Bigr)
  \end{equation*}
  and the term $10\cdot a_5$ can be estimated by
  \[
    10\cdot a_5 \leq \floor{\frac{n-2}{3}}\cdot\binom{n}{2}.
  \]
  Brouwer and Tolhuizen sharpened the estimate of $10\cdot a_5$
  for linear codes in the following way.
  Let $C$ be additionally linear and
  let $n-2=3\cdot a+b$ with $a\in\inte_{\geq 0}$, $b\in\set{0,1,2}$, then
  \begin{equation}
    10\cdot a_5 \leq
      \begin{cases}
        a\cdot\binom{n}{2}
            &\text{ for $a$ odd and $b=0$,}\\
        (a-1)\cdot\binom{n}{2}
            &\text{ for $a$ odd and $b\in\set{1,2}$,}\\
        (a-1)\cdot\binom{n}{2} +
            \frac{\binom{n}{b}\cdot\binom{b+2}{2}}{\binom{b+2}{b}}
            &\text{ for $a$ even.}
      \end{cases}
  \end{equation}
  Our purpose is to show that if $C$ is an $\left[n_t, n_t - t, 5\right]$ code,
  then $2s>2^{t+1}$ or equivalently $s>2^t$.
  But this contradicts $s\leq 2^t$ which follows from \equRefContradict{}.

  Let us therefore distinguish 6 cases depending on wether $a$
  is odd or even and wether $b$ equals 2, 1 or 0.\\[12px]
  \noindent\textbf{Case $a$ odd:}
  If $\mathbf{b=2}$, then $a-1 = \frac{n-7}{3}$.
  From \equRefEstimate{} follows
  \[
    10\cdot a_5 \leq (a-1)\cdot\binom{n}{2}
    = \frac{n(n-1)(n-7)}{6}
  \]
  and therefore
  \[
    s \geq 1 + n + \frac{n(n-1)}{2} + \frac{5}{2}n
  \]
  which is equivalent to
  \begin{equation} 
    2s \geq (n+3)^2 - 7.
  \end{equation}
  Now we set $\lambda_{a,b,\eps}=2$,
  $n=n_t=\floor{\sqrt{2^{t+1}} + 0.5} - 2 = \sqrt{2^{t+1}} - \frac{3}{2} - \eps$
  with $\eps\in[0,1)$ and $k=n-t$.
  Hence
  \begin{align*}
    2s \geq (n_t+3)^2 - 7 &= (\sqrt{2^{t+1}} + \frac{1}{2} +1 - \eps)^2 - 7\\
      &> 2^{t+1} + \sqrt{2^{t+1}} +\frac{1}{4} -7\\
      &> 2^{t+1}
  \end{align*}
  when $t\geq 5$, and this contradicts \equRefContradict{}.

  If $\mathbf{b=1}$, then $a-1 = \frac{n-6}{3}$.
  From \equRefEstimate{} follows
  \[
    10\cdot a_5 \leq (a-1)\cdot\binom{n}{2}
    = \frac{n(n-1)(n-6)}{6}
  \]
  and therefore
  \[
    s \geq 1 + n + \frac{n(n-1)}{2} + 2(n-1)
  \]
  which is equivalent to
  \begin{equation} 
    2s \geq (n+\frac{5}{2})^2 - \frac{33}{4}.
  \end{equation}
  Now we set $\lambda_{a,b,\eps}=2$,
  $n=n_t=\floor{\sqrt{2^{t+1}} + 0.5} - 2 = \sqrt{2^{t+1}} - \frac{3}{2} - \eps$
  with $\eps\in[0,1)$, $k=n-t+1$ and want to show that
  \[
    2s \geq (\sqrt{2^{t+1}} + 1 - \eps)^2 - \frac{33}{4} > 2^{t+1}.
  \]
  This is true if $0 \leq \eps \leq 1-\frac{1}{2^{(t-4)/2}}$ and
  $t\geq 5$ because setting $\eps = 1-\frac{1}{2^{(t-4)/2}}$ leads to
  \begin{align*}
    2s \geq (\sqrt{2^{t+1}} + \frac{1}{2^{(t-4)/2}})^2 - \frac{33}{4}
      &= 2^{t+1} + \frac{2\sqrt{2^{t+1}}}{2^{(t-4)/2}} +\frac{1}{2^{t-4}} - \frac{33}{4}\\
      &= 2^{t+1} + 8\sqrt{2} +\frac{1}{2^{t-4}} - 2\frac{33}{4}\\
      &> 2^{t+1}
  \end{align*}
  which contradicts \equRefContradict{}.
  If $1-\frac{1}{2^{(t-4)/2}} < \eps < 1$ we set
  $\lambda_{a,b,\eps}=1$, $n = n_t = \floor{\sqrt{2^{t+1}} + 0.5} - 1$,
  $k=n-t$ and are now in the case
  $a$ odd and $b=2$. Putting these values into \equRefAoddBtwo{} leads to
  \[
    2s \geq (n_t+3)^2 - 7 > 2^{t+1}
  \]
  which contradicts \equRefContradict{}.

  If $\mathbf{b=0}$, then $a = \frac{n-2}{3}$.
  From \equRefEstimate{} follows
  \[
    10\cdot a_5 \leq a\cdot\binom{n}{2}
    = \frac{n(n-1)(n-2)}{6}
  \]
  and therefore
  \[
    s \geq 1 + n + \frac{n(n-1)}{2}
  \]
  which is equivalent to
  \begin{equation} 
    2s \geq (n + \frac{1}{2})^2 + \frac{7}{4}.
  \end{equation}
  But setting $\lambda_{a,b,\eps}=2$, $n = \floor{\sqrt{2^{t+1}} + 0.5} - 2$
  and $k=n-t$ does not contradict \equRefContradict{}.
  Therefore we set $\lambda_{a,b,\eps}=1$,
  $n = n_t = \floor{\sqrt{2^{t+1}} + 0.5} - 1$, $k=n_t-t$
  and are now in the case $a$ odd and $b=1$.
  Putting these values into \equRefAoddBone{} leads to
  \[
    2s \geq (n_t+\frac{5}{2})^2 - \frac{33}{4} > 2^{t+1}
  \]
  for $t\geq 3$ which contradicts \equRefContradict{}.

  \noindent\textbf{Case $a$ even:}
  If $\mathbf{b=2}$, then $a = \frac{n-4}{3}$. From \equRefEstimate{} follows
  \[
    10\cdot a_5 \leq \frac{n(n-1)(n-4)}{6}
  \]
  and therefore
  \begin{align*}
    s &\geq 1 + n + \frac{n(n-1)}{2} + \frac{3}{n-1}
            \left( \binom{n}{3} - \frac{n(n-1)(n-4)}{6} \right)\\
      &= 1 + n + \frac{n(n-1)}{2} + n
  \end{align*}
  which is equivalent to
  \begin{equation} 
    2s \geq (n+\frac{3}{2})^2 - \frac{1}{4}.
  \end{equation}
  But setting $\lambda_{a,b,\eps}=2$, $n = \floor{\sqrt{2^{t+1}} + 0.5} - 2$
  and $k=n-t$ does not contradict \equRefContradict{}.
  Therefore we set $\lambda_{a,b,\eps}=1$,
  $n = n_t = \floor{\sqrt{2^{t+1}} + 0.5} - 1$, $k=n_t-t$
  and are now in the case $a$ odd and $b=0$.
  But again, putting these values into \equRefAoddBzero{} does not contradict \equRefContradict{}.
  Hence we set $\lambda_{a,b,\eps}=0$,
  $n = n_t = \floor{\sqrt{2^{t+1}} + 0.5}$, $k=n-t$
  and are now in the case $a$ odd and $b=1$.
  Now, putting these values into \equRefAoddBone{} contradicts \equRefContradict{}.

  If $\mathbf{b=1}$, then $a = \frac{n-3}{3}= \frac{n}{3}-1$. From \equRefEstimate{} follows
  \[
    10\cdot a_5 \leq \frac{n(n-1)(n-6)}{6} + n = \frac{n(n-3)(n-4)}{6}
  \]
  and therefore
  \begin{align*}
    s &\geq 1 + n + \frac{n(n-1)}{2} + \frac{3}{n}
            \left( \binom{n}{3} - \frac{n(n-3)(n-4)}{6} \right)\\
      &= 1 + n + \frac{n(n-1)}{2} + 2n-5
  \end{align*}
  which is equivalent to
  \begin{equation} 
    2s \geq (n+\frac{5}{2})^2 - \frac{57}{4}.
  \end{equation}
  Now we set $\lambda_{a,b,\eps}=2$,
  $n=n_t=\floor{\sqrt{2^{t+1}} + 0.5} - 2 = \sqrt{2^{t+1}} + - \frac{3}{2} - \eps$
  with $\eps\in[0,1)$, $k=n-t$ and want to show that
  \[
    2s \geq (\sqrt{2^{t+1}} + 1 - \eps)^2 - \frac{57}{4} > 2^{t+1}.
  \]
  This is true if $0 \leq \eps \leq 1-\frac{1}{2^{(t-5)/2}}$
  and $t\geq 6$ because setting $\eps = 1-\frac{1}{2^{(t-5)/2}}$ leads to
  \begin{align*}
    2s \geq (n_t+\frac{5}{2})^2 - \frac{57}{4}
      &= (\sqrt{2^{t+1}} + \frac{1}{2^{(t-5)/2}})^2 - \frac{57}{4}\\
      &= 2^{t+1} + \frac{2\sqrt{2^{t+1}}}{2^{(t-5)/2}} +\frac{1}{2^{t-5}} - \frac{57}{4}\\
      &= 2^{t+1} + 16 +\frac{1}{2^{t-5}} - \frac{57}{4}\\
      &> 2^{t+1}
  \end{align*}
  which contradicts \equRefContradict{}.
  If $1-\frac{1}{2^{(t-5)/2}} < \eps < 1$ we try setting
  $\lambda_{a,b,\eps}=1$, $n = n_t = \floor{\sqrt{2^{t+1}} + 0.5} - 1$,
  $k=n-t$ and are now in the case $a$ even and $b=2$.
  Putting these values into \equRefAevenBtwo{} we want to show that
  \[
    2s \geq (n_t+\frac{3}{2})^2 - \frac{1}{4} > 2^{t+1}.
  \]
  This is true if $\eps \leq 1-\frac{1}{2^{(t+7)/2}}$
  because setting $\eps = 1-\frac{1}{2^{(t+7)/2}}$
  leads to
    \begin{align*}
    2s \geq (n_t+\frac{3}{2})^2 - \frac{1}{4}
      &= (\sqrt{2^{t+1}} + \frac{1}{2^{(t+7)/2}})^2 - \frac{1}{4}\\
      &= 2^{t+1} + \frac{2\sqrt{2^{t+1}}}{2^{(t+7)/2}} +\frac{1}{2^{t+7}} - \frac{1}{4}\\
      &= 2^{t+1} + \frac{1}{4} +\frac{1}{2^{t+7}} - \frac{1}{4}\\
      &> 2^{t+1}
  \end{align*}
  which contradicts \equRefContradict{}.
  Therefore we set $\lambda_{a,b,\eps}=0$,
  $n = n_t = \floor{\sqrt{2^{t+1}} + 0.5}$, $k=n-t$
  if $1-\frac{1}{2^{(t+7)/2}} < \eps < 1$
  and are now in the case $a$ odd and $b=0$.
  Putting these values into \equRefAoddBzero{} contradicts \equRefContradict{}.

  If $\mathbf{b=0}$, then $a = \frac{n-2}{3}$. From \equRefEstimate{} follows
  \[
    10\cdot a_5 \leq \frac{n(n-1)(n-5)}{6} + 1
  \]
  and therefore
  \begin{align*}
    s &\geq 1 + n + \frac{n(n-1)}{2} + \frac{3}{n-2}
            \left( \binom{n}{3} - \frac{n(n-1)(n-5)}{6} - 1 \right)\\
      &= 1 + n + \frac{n(n-1)}{2} + \frac{3n+3}{2}
  \end{align*}
  which is equivalent to
  \[
    2s \geq (n+2)^2 + 1.
  \]
  But setting $\lambda_{a,b,\eps}=2$, $n = n_t =\floor{\sqrt{2^{t+1}} + 0.5} - 2$
  and $k=n-t$ only contradicts \equRefContradict{} if $0 \leq \eps \leq \frac{1}{2}$.
  If $\frac{1}{2} < \eps <1$ we set $n = n_t =\floor{\sqrt{2^{t+1}} + 0.5} - 1$,
  $k=n_t-t$ and are now in the case $a$ even and $b=1$.
  Putting these values into \equRefAevenBone{} leads to
  \[
    2s \geq (n_t+\frac{5}{2})^2 - \frac{57}{4} > 2^{t+1}
  \]
  for $t\geq 5$ which contradicts \equRefContradict{}.
\end{proof}

On the codes side, \thmref{thm:improved-codes-non-existence-sidon-bound}
improves for $t$ even and $t\geq 16$ several entries in the codes table
of the MinT project \cite{schurer2006mint} (\url{http://mint.sbg.ac.at/}).
Some examples are listed in \corref{cor:improvedMaxMinDist}. In \cite{schurer2006mint},
the maximal minimum distance was listed as 4 or 5, but now we know that it is 4:

\begin{cor} \label{cor:improvedMaxMinDist}
  The maximal minimum distance of a linear code with the following parameters
  $[n,k]$ is 4:
  \begin{table}[ht]
\centering
\moreSpaceBetweenRows{1.3}
\begin{tabular}{lll}
\href{http://mint.sbg.ac.at/query.php?i=c&var=q-T-\%CE\%BB-t-d-m-n-k&b=2&s=360&n=344&de=1&p=snd}{$[360,344]$} &
\href{http://mint.sbg.ac.at/query.php?i=c&var=q-T-\%CE\%BB-t-d-m-n-k&b=2&s=723&n=705&de=1&p=snd}{$[723,705]$} &
\href{http://mint.sbg.ac.at/query.php?i=c&var=q-T-\%CE\%BB-t-d-m-n-k&b=2&s=1446&n=1426&de=1&p=snd}{$[1446,1426]$} \\
\href{http://mint.sbg.ac.at/query.php?i=c&var=q-T-\%CE\%BB-t-d-m-n-k&b=2&s=2895&n=2873&de=1&p=snd}{$[2895,2873]$} &
\href{http://mint.sbg.ac.at/query.php?i=c&var=q-T-\%CE\%BB-t-d-m-n-k&b=2&s=5791&n=5767&de=1&p=snd}{$[5791, 5767]$},
\href{http://mint.sbg.ac.at/query.php?i=c&var=q-T-\%CE\%BB-t-d-m-n-k&b=2&s=5792&n=5768&de=1&p=snd}{$[5792, 5768]$} &
\href{http://mint.sbg.ac.at/query.php?i=c&var=q-T-\%CE\%BB-t-d-m-n-k&b=2&s=11583&n=11557&de=1&p=snd}{$[11583, 11557]$}\\
\end{tabular}
\end{table}
\end{cor}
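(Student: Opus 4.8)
The plan is to reduce everything to \propref{prop:max-min-charac}(b), which characterises exactly when the maximal minimum distance equals $4$: for $n > t \geq 2$ one has $\dmax{n, n-t} = 4$ if and only if $\smax{t} \leq n \leq 2^{t-1}$. So for each pair $[n,k]$ in the table I would set $t := n - k$ (all of these turn out to be even, ranging from $16$ to $26$) and verify the two inequalities $\smax{t} \leq n$ and $n \leq 2^{t-1}$. Because the characterisation is an equivalence, this simultaneously rules out $d = 5$ and confirms that a $d = 4$ code exists, so that $\dmax{n, n-t} = 4$; in particular no separate construction of a distance-$4$ code is needed.

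The upper inequality $n \leq 2^{t-1}$ is immediate: each listed $n$ is of the order $\sqrt{2^{t+1}} = 2^{(t+1)/2}$, which is far below $2^{t-1}$ once $t \geq 16$, so no real work is needed there. The content lies entirely in the lower inequality $\smax{t} \leq n$, and this is exactly where \thmref{thm:improved-codes-non-existence-sidon-bound} is brought to bear. For $t$ even that theorem gives $\smax{t} \leq \floor{\sqrt{2^{t+1}} + 0.5} - \lambda_{a,b,\eps}$, so for each $t$ I would compute $\floor{\sqrt{2^{t+1}} + 0.5}$, read off $a$ and $b$ from $\floor{\sqrt{2^{t+1}} + 0.5} - 4 = 3a + b$, determine $\eps$, and select the correct branch of $\lambda_{a,b,\eps}$. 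The resulting value $n_t = \floor{\sqrt{2^{t+1}} + 0.5} - \lambda_{a,b,\eps}$ should match the tabulated $n$ (with the single exception $[5792,5768]$, where $n_t = 5791 \leq 5792 = n$ still suffices), yielding $\smax{t} \leq n_t \leq n$ for every entry.

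The step I expect to be most delicate is the determination of $\lambda_{a,b,\eps}$, since it depends on the fractional part $\eps$ of $\sqrt{2^{t+1}} + 0.5$ through thresholds such as $1 - 2^{-(t-4)/2}$ and $1 - 2^{-(t-5)/2}$, and one must evaluate $\sqrt{2^{t+1}}$ accurately enough to decide on which side of these thresholds $\eps$ falls as well as whether $a$ is odd or even. For instance, at $t = 16$ one finds $\floor{\sqrt{2^{17}} + 0.5} = 362$, hence $a = 119$ odd and $b = 1$ with $\eps \approx 0.54 \leq 1 - 2^{-6}$, giving $\lambda = 2$ and $n_t = 360$; at $t = 18$ one gets $a = 240$ even, $b = 0$, $\eps \approx 0.58 > 0.5$, giving $\lambda = 1$ and $n_t = 723$. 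The remaining cases $t \in \set{20, 22, 24, 26}$ are handled the same way, each reducing to a single numeric comparison of $\eps$ against the appropriate threshold, and all routine once the relevant square roots are computed.
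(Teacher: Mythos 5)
Your proposal is correct and follows essentially the same route as the paper: for each listed $[n,k]$ set $t=n-k$, compute $\floor{\sqrt{2^{t+1}}+0.5}$, $a$, $b$, $\eps$ and the corresponding $\lambda_{a,b,\eps}$, and invoke \thmref{thm:improved-codes-non-existence-sidon-bound} to rule out minimum distance $5$ (your sample computations at $t=16$ and $t=18$ agree with the paper's table, as does your handling of $[5792,5768]$ via $\smax{24}\leq 5791\leq 5792$). The only cosmetic difference is that you derive the existence of a distance-$4$ code from \propref{prop:max-min-charac}~(b), whereas the paper takes it from the MinT tables; both are valid.
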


\begin{proof}
  For $t\in\set{16,18,20,22,24,26}$ we follow
  \thmref{thm:improved-codes-non-existence-sidon-bound},
  calculate $a,b$ and $\eps$,
  and obtain $\lambda_{a,b,\eps}$, n and k as in the following table:
  \begin{center}
  \moreSpaceBetweenRows{1.2}
  \begin{tabular}{@{}ccccccc@{}} \toprule
    $t$ & $\floor{\sqrt{2^{t+1}} + 0.5}$ & $a$ & $b$ & $\eps$ & $\lambda_{a,b,\eps}$ & $[n,k]$\\
    \midrule
    16 &   362 &  119 & 1 & $\approx 0.538 < 0.984$ & 2 &
    \href{http://mint.sbg.ac.at/query.php?i=c&var=q-T-\%CE\%BB-t-d-m-n-k&b=2&s=360&n=344&de=1&p=snd}{$[360,344]$}\\

    18 &   724 &  240 & 0 & $\approx 0.577 > 0.500$ & 1 &
    \href{http://mint.sbg.ac.at/query.php?i=c&var=q-T-\%CE\%BB-t-d-m-n-k&b=2&s=723&n=705&de=1&p=snd}{$[723,705]$}\\

    20 &  1448 &  481 & 1 & $\approx 0.654 < 0.996$ & 2 &
    \href{http://mint.sbg.ac.at/query.php?i=c&var=q-T-\%CE\%BB-t-d-m-n-k&b=2&s=1446&n=1426&de=1&p=snd}{$[1446,1426]$} \\

    22 &  2896 &  964 & 0 & $\approx 0.809 > 0.500$   & 1 &
    \href{http://mint.sbg.ac.at/query.php?i=c&var=q-T-\%CE\%BB-t-d-m-n-k&b=2&s=2895&n=2873&de=1&p=snd}{$[2895,2873]$} \\

    24 &  5793 & 1929 & 2 & $\approx 0.118 < 0.999$ & 2 &
    \href{http://mint.sbg.ac.at/query.php?i=c&var=q-T-\%CE\%BB-t-d-m-n-k&b=2&s=5791&n=5767&de=1&p=snd}{$[5791, 5767]$} and\\
     & & & & & &
    \href{http://mint.sbg.ac.at/query.php?i=c&var=q-T-\%CE\%BB-t-d-m-n-k&b=2&s=5792&n=5768&de=1&p=snd}{$[5792, 5768]$}\\

    26 & 11585 & 3860 & 1 & $\approx 0.737 < 0.999$ & 2 &
    \href{http://mint.sbg.ac.at/query.php?i=c&var=q-T-\%CE\%BB-t-d-m-n-k&b=2&s=11583&n=11557&de=1&p=snd}{$[11583, 11557]$}\\
    \bottomrule
  \end{tabular}
  \end{center}

\end{proof}

\section{Conclusion}

We finish by giving \tabref{tab:maxSExt} about the maximum size of Sidon sets
and related bounds/constructions in small dimensions.
The codes bound mentioned in this table arises from \propref{prop:max-min-charac} (b)
and Grassl's codes table \cite{Grassl:codetables} (\href{http://codetables.de}{http://codetables.de}).
Similarly, the codes constructions come from \propref{prop:max-min-charac} (c)
and Grassl's codes table.

For example, take column $t=12$ from \tabref{tab:maxSExt}:
The calculation of the trivial bound \equRefTrivialBound{} leads to $\smax{t}\leq 91$
and that of the new bound \equRefNewBound{}
from \thmref{thm:improved-codes-non-existence-sidon-bound}
leads to $\smax{t}\leq 90$ ($a=29$, $b=0$, $\eps\approx 0.009$ and $\lambda_{a,b,\eps}=1$).

\propref{prop:max-min-charac} (b) gives us the codes bound,
that is the smallest $n$, such that $\dmax{n, n-12}=4$.
A look at Grassl's codes table leads to $\smax{t}\leq n=89$,
since \href{http://codetables.de/BKLC/BKLC.php?q=2&n=89&k=77}{$\dmax{89, 77}=4$}
but \href{http://codetables.de/BKLC/BKLC.php?q=2&n=88&k=76}{$\dmax{88, 76}=4\mbox{\ or }5$}.

The codes construction uses \propref{prop:max-min-charac} (c) in the following way:
Finding the largest $n$ such that $\dmax{n, n-12}\geq 5$
gives a sum-free Sidon set,
and adding 0 leads to $n+1$,
which is the size of the largest known Sidon set.
Again, Grassl's codes table leads to $n=65$ and so $\smax{t}\geq 66$,
since \href{http://codetables.de/BKLC/BKLC.php?q=2&n=65&k=53}{$\dmax{65, 53}=5$}
but \href{http://codetables.de/BKLC/BKLC.php?q=2&n=66&k=54}{$\dmax{66, 54}=4\mbox{\ or }5$}.

\begin{table}[ht]
  \moreSpaceBetweenRows{1.2}
  \centering
  \begin{tabular}{@{}rllllllllllll@{}}\toprule
    $t$ &
    4 & 5 & 6  & 7  & 8  & 9  & 10 & 11 & 12 & 13  & 14  & 15  \\
    \midrule
    Trivial bound \equRefTrivialBound{}  &
    6 & 8 & 11 & 16 & 23 & 32 & 45 & 64 & 91 & 128 & 181 & 256 \\
    New bound \equRefNewBound{} &
    &   & 10 & 14 & 21 & 30 & 43 & 62 & 90 & 126 & 180 & 254 \\
    Codes bound &
    \href{http://codetables.de/BKLC/BKLC.php?q=2&n=6&k=2}{6} & 
    \href{http://codetables.de/BKLC/BKLC.php?q=2&n=7&k=2}{7} & 
    \href{http://codetables.de/BKLC/BKLC.php?q=2&n=9&k=3}{9} & 
    \href{http://codetables.de/BKLC/BKLC.php?q=2&n=12&k=5}{12} & 
    \href{http://codetables.de/BKLC/BKLC.php?q=2&n=18&k=10}{18} & 
    \href{http://codetables.de/BKLC/BKLC.php?q=2&n=24&k=15}{24} & 
    \href{http://codetables.de/BKLC/BKLC.php?q=2&n=34&k=24}{34} & 
    \href{http://codetables.de/BKLC/BKLC.php?q=2&n=58&k=47}{58} & 
    \href{http://codetables.de/BKLC/BKLC.php?q=2&n=89&k=77}{89} & 
    \href{http://codetables.de/BKLC/BKLC.php?q=2&n=125&k=112}{125} & 
    \href{http://codetables.de/BKLC/BKLC.php?q=2&n=179&k=165}{179} & 
    \href{http://codetables.de/BKLC/BKLC.php?q=2&n=254&k=239}{254} \\ 

    $\smax{t}$ &
    6 & 7 &  9 & 12 & 18 & 24 & 34 & ?  & ?  &  ?  & ?   &  ?  \\
    Codes constr. &
    \href{http://codetables.de/BKLC/BKLC.php?q=2&n=5&k=1}{6} & 
    \href{http://codetables.de/BKLC/BKLC.php?q=2&n=6&k=1}{7} & 
    \href{http://codetables.de/BKLC/BKLC.php?q=2&n=8&k=2}{9} & 
    \href{http://codetables.de/BKLC/BKLC.php?q=2&n=11&k=4}{12} & 
    \href{http://codetables.de/BKLC/BKLC.php?q=2&n=17&k=9}{18} & 
    \href{http://codetables.de/BKLC/BKLC.php?q=2&n=23&k=14}{24} & 
    \href{http://codetables.de/BKLC/BKLC.php?q=2&n=33&k=23}{34} & 
    \href{http://codetables.de/BKLC/BKLC.php?q=2&n=47&k=36}{48} & 
    \href{http://codetables.de/BKLC/BKLC.php?q=2&n=65&k=53}{66} & 
    \href{http://codetables.de/BKLC/BKLC.php?q=2&n=81&k=68}{82} & 
    \href{http://codetables.de/BKLC/BKLC.php?q=2&n=128&k=114}{129} & 
    \href{http://codetables.de/BKLC/BKLC.php?q=2&n=151&k=136}{152} \\ 
    \bottomrule
  \end{tabular}
  \caption{ \label{tab:maxSExt}
    Maximal size of a Sidon set in $\binvn{t}$ and related bounds/constructions.
  }
\end{table}

\printbibliography

\end{document}